\newtheorem{theorem}{Theorem}[section]
\newtheorem{corollary}[theorem]{Corollary}
\newtheorem{lemma}[theorem]{Lemma}
\newtheorem{proposition}[theorem]{Proposition}
\theoremstyle{definition}
\newtheorem{definition}[theorem]{Definition}
\newtheorem{question}[theorem]{Question}
\newtheorem*{question*}{Question}
\newtheorem{conjecture}[theorem]{Conjecture}
\newtheorem*{conjecture*}{Conjecture}
\newtheorem{example}[theorem]{Example}
\newtheorem{notation}[theorem]{Notation}
\newtheorem*{notation*}{Notation}
\newtheorem*{claim*}{Claim}
\numberwithin{equation}{theorem}
\def\Ext{\operatorname{Ext}}
\def\Tor{\operatorname{Tor}}
\def\rad{\operatorname{rad}}
\def\Hom{\operatorname{Hom}}
\def\sHom{\operatorname{\underline{Hom}}}
\def\End{\operatorname{End}}
\def\rhom{\operatorname{\mathbf{R}Hom}}
\def\CM{\operatorname{\mathsf{CM}}}
\def\sCM{\operatorname{\underline{\mathsf{CM}}}}
\def\d{\operatorname{\mathsf{D}}}
\def\Db{\mathsf{D}^{\rm b}}
\def\k{\operatorname{\mathsf{K}}}
\def\thick{\operatorname{\mathsf{thick}}}
\def\add{\operatorname{\mathsf{add}}}
\def\ind{\operatorname{\mathsf{ind}}}
\def\da{\mathsf{D}(\mathcal{A})}
\def\Mod{\operatorname{\mathsf{Mod}}}
\def\fl{\operatorname{\mathsf{fl}}}
\def\mod{\operatorname{\mathsf{mod}}}
\def\smod{\operatorname{\underline{\mathsf{mod}}}}
\def\proj{\operatorname{\mathsf{proj}}}
\def\inj{\operatorname{\mathsf{inj}}}
\def\gldim{\mathrm{gl}.\!\dim}
\def\pd{\operatorname{pd}}
\def\id{\operatorname{id}}
\def\depth{\operatorname{depth}}
\def\A{\mathcal{A}}
\def\C{\mathcal{C}}
\def\G{\mathcal{G}}
\def\D{\mathcal{D}}
\def\T{\mathcal{T}}
\def\U{\mathcal{U}}
\def\X{\mathcal{X}}
\def\M{\mathcal{M}}
\def\F{\mathcal{F}}
\def\sF{\underline{\mathcal{F}}}
\def\p{\mathfrak{p}}
\def\Z{\mathbb{Z}}
\begin{document}
\setlength{\baselineskip}{15pt}
\title{Remarks on dimensions of triangulated categories}
\author{Takuma Aihara}
\address{Department of Mathematics, Tokyo Gakugei University, 4-1-1 Nukuikita-machi, Koganei, Tokyo 184-8501, Japan}
\email{aihara@u-gakugei.ac.jp}
\author{Ryo Takahashi}
\address{Graduate School of Mathematics, Nagoya University, Furocho, Chikusaku, Nagoya, Aichi 464-8602, Japan}
\email{takahashi@math.nagoya-u.ac.jp}
\urladdr{http://www.math.nagoya-u.ac.jp/~takahashi/}
\keywords{triangulated category, derived category, dimension, locally finite, orthgonal subcategory, finite (Cohen--Macaulay) representation type}
\thanks{2010 {\em Mathematics Subject Classification.} 18E30, 16E35, 13D09}
\thanks{TA was partly supported by JSPS Grant-in-Aid for Young Scientists 15K17516. RT was partly supported by JSPS Grant-in-Aid for Scientific Research 16K05098.}

\begin{abstract}
In this paper, we explore when a locally finite triangulated category has dimension zero or finite representation type. We also study generation of derived categories by orthogonal subcategories.
\end{abstract}
\maketitle
\section{Introduction}

In this paper, we mention two remarks regarding the notion of dimensions of triangulated categories, which has been introduced by Rouquier \cite{R}.

The first subject of this paper, which is discussed in Section 2, is to describe ``smallest'' triangulated categories.
We focus on three kinds of smallness of triangulated categories: local finiteness, dimension zero and finite representation type.
It is natural to ask if there exist implications among these conditions.
We give the following answer to this question.

\begin{theorem}[Theorems \ref{Glr}, \ref{main}]
\begin{enumerate}[\rm(1)]
\item
Let $\Lambda$ be an Iwanaga--Gorenstein algebra over a complete local ring with an isolated singularity.
The stable category $\sCM(\Lambda)$ of Cohen--Macaulay modules is locally finite if and only if $\Lambda$ has finite CM representation type.
\item
Let $\T$ be a Krull--Schmidt triangulated category.
If $\T$ is finitely generated and locally finite, then it has dimension zero.
If $\T$ is Ext-finite and has dimension zero, then it is locally finite.
\end{enumerate}
\end{theorem}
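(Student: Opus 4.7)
My plan is to treat the abstract part (2) first and then deploy Auslander--Reiten theory for the converse in (1). Throughout I take \emph{locally finite} to mean that for each indecomposable $X\in\T$ only finitely many indecomposable $Y\in\T$ satisfy $\Hom(X,Y)\neq 0$, with each such Hom finite dimensional, and \emph{Ext-finite} to mean $\bigoplus_{i\in\Z}\Hom(X,Y[i])$ is finite dimensional for every pair $X,Y$.

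For the first implication of (2), suppose $\T$ is locally finite with classical generator $G$, so $\thick(G)=\T$. The crux is that every indecomposable $X\in\T$ admits some $i\in\Z$ with $\Hom(G[i],X)\neq 0$: the full subcategory
\[
\C=\{Y\in\T\mid\Hom(Y[j],X)=0 \text{ for all } j\in\Z\}
\]
is thick by direct use of the long exact Hom sequence, so if $G\in\C$ then $\thick(G)=\T\subseteq\C$, forcing $\End(X)=0$ and contradicting $X\neq 0$. Writing $G=\bigoplus_{k=1}^n G_k$ with each $G_k$ indecomposable and applying local finiteness to each $G_k$ shows that $S=\{Y\in\ind\T\mid\Hom(G,Y)\neq 0\}$ is finite. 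Since every indecomposable of $\T$ is a shift of some element of $S$, setting $H=\bigoplus_{Y\in S}Y$ yields $\T=\add\bigl(\bigoplus_{i\in\Z}H[i]\bigr)$, hence $\dim\T=0$. For the converse, assume $\T$ is Ext-finite with $\dim\T=0$ and fix $G=\bigoplus_{k=1}^n G_k$ with $\T=\add\bigl(\bigoplus_{i\in\Z}G[i]\bigr)$; every indecomposable of $\T$ has the form $G_k[j]$. For a fixed indecomposable $X=G_k[j]$, the indecomposables $Y=G_l[m]$ with $\Hom(X,Y)=\Hom(G_k,G_l[m-j])\neq 0$ are finite in number since Ext-finiteness allows only finitely many shifts $m-j$, and each such Hom is itself finite dimensional; this is local finiteness.

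For part (1), the forward direction is immediate: finite CM representation type makes $\sCM(\Lambda)$ have only finitely many indecomposables, and the isolated-singularity plus complete-local hypothesis yields Hom-finiteness, so local finiteness is automatic. For the converse I would exploit that $\sCM(\Lambda)$ has AR triangles under these hypotheses. Local finiteness forces each connected component of the AR quiver of $\sCM(\Lambda)$ to be finite, because the mesh structure produces, from any indecomposable $Y$ in the AR component of a fixed $X$, a nonzero morphism to or from $X$, and local finiteness caps the number of such $Y$. Combining this with the standard Auslander-type fact that a finite component in the AR quiver of $\sCM(\Lambda)$ over a complete local isolated-singularity Iwanaga--Gorenstein algebra already exhausts the stable category then yields finite CM representation type.

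The main obstacle I anticipate is the converse of (1). Even granted that part (2) supplies $\dim\sCM(\Lambda)=0$, this only records finiteness of indecomposables modulo the shift $[1]=\Omega^{-1}$, not at the level of isomorphism classes, so genuine AR-theoretic input is essential. The subtle step is to verify that, in a locally finite Krull--Schmidt triangulated category with AR triangles, morphisms really do propagate through the AR quiver---so that local finiteness forces AR components to be finite---and then to rule out infinitely many distinct components in $\sCM(\Lambda)$ via the connectedness properties special to the isolated-singularity Iwanaga--Gorenstein setting.
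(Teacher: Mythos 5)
Your proof of part~(2) matches the paper's argument essentially verbatim: for the forward direction you show the thick closure forces every indecomposable to be a shift of one of the finitely many indecomposables receiving a map from the generator, and for the converse you decompose the generator and use Ext-finiteness to cap the number of shifts with a nonzero Hom. (You state local finiteness via the dual of Definition~\ref{def:LF}(i), but the paper explicitly notes the two sets of conditions are equivalent, so this is fine.)

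Part~(1) is where your proposal diverges, and where the genuine gap lies. The forward direction is indeed easy. For the converse you propose an Auslander--Reiten route: show local finiteness forces each AR component of $\sCM(\Lambda)$ to be finite, then invoke an Auslander-type theorem that a finite component already exhausts the stable category. You correctly flag both steps as obstacles, and they really are: the claim that ``the mesh structure produces, from any indecomposable $Y$ in the AR component of a fixed $X$, a nonzero morphism to or from $X$'' is false in general --- in an infinite component (say of type $\mathbb{Z}A_\infty$) the relevant compositions of irreducible maps may vanish because of the mesh relations, and making the argument rigorous requires Harada--Sai-type length control or heavy structure theory of AR components that is not in place here; and the ``finite component exhausts'' step is itself a significant theorem whose applicability over a complete local base ring $R$ (rather than a field) needs verification. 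The paper avoids AR theory entirely. It instead produces an explicit test object: with $S=\Lambda/\operatorname{rad}\Lambda$ and $d=\operatorname{id}\Lambda_\Lambda$, the syzygy $\Omega^dS$ lies in $\CM(\Lambda)$, and Auslander's result (cited via \cite{GN}) gives that $\operatorname{Ext}^1_\Lambda(\Omega^dS,X)=0$ forces $X$ projective. Combined with Proposition~\ref{LFRF}, which is a purely formal argument in a Frobenius category: local finiteness implies the set of indecomposables $Y$ with $\underline{\operatorname{Hom}}(\Omega^dS[-1],Y)\ne 0$ is finite, and every indecomposable outside that set has trivial $\operatorname{Ext}^1$ against $\Omega^dS$, hence is projective, hence zero in $\sCM(\Lambda)$. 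This yields an additive generator directly, with no input from AR quivers. You should replace your AR-theoretic sketch with this test-object argument, or else supply full proofs of the two AR-theoretic steps you correctly identified as missing.
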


\noindent
Applying the first assertion of this theorem, we deduce that for an isolated hypersurface singularity $R$, the stable category $\sCM(R)$ is locally finite if and only if it has dimension zero, if and only if $R$ has finite CM representation type (Corollary \ref{ihs}).

The second subject of this paper, which is discussed in Section 3, is to find a generator $\G$ of a derived category $\D$ such that the dimension of $\D$ with respect to $\G$ in the sense of \cite{ddcm} is as small as possible.
This brings us a lower bound of the dimension of the derived category. 
The main result on this subject is the following.

\begin{theorem}[Theorem \ref{17121}]
Let $\A$ be an abelian category and $\X$ a full subcategory.
Let $M$ be an object of $\A$ admitting an exact sequence with $X_0,\dots,X_n\in\X$
\begin{align*}
&0\to X_n\to \cdots \to X_0 \to N\to M\to0\\
\text{(resp. }&0\to M\to N\to X_0\to\cdots\to X_n\to0\text{)}
\end{align*}
such that the corresponding element in $\Ext_\A^{n+1}(M,X_n)$ (resp. $\Ext_\A^{n+1}(X_n, M)$) is nonzero.
Then $M$ is outside $\langle {}^\perp\X \rangle_{n+1}$ (resp. $\langle \X^\perp \rangle_{n+1}$) in the derived category $\da$ of $\A$.
\end{theorem}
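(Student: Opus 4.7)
The plan is to translate the $(n+1)$-fold extension into a nonzero morphism in the derived category, and then to argue that every object in $\langle{}^\perp\X\rangle_{n+1}$ admits no such morphism. The guiding principle is that the Yoneda class is detected by a single cohomological functor on $\da$ which, by definition of ${}^\perp\X$, automatically annihilates the entire subcategory generated by ${}^\perp\X$.

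First I would invoke the classical identification $\Ext^{n+1}_\A(M,X_n) \cong \Hom_{\da}(M, X_n[n+1])$: splicing the given long exact sequence into $n+1$ short exact sequences and composing the associated connecting morphisms in $\da$ yields a morphism $\varphi\colon M \to X_n[n+1]$, and the hypothesis that the Yoneda class is nonzero is precisely $\varphi \ne 0$.

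Next, I would analyze the full subcategory
\[
\C := \{\, A \in \da : \Hom_{\da}(A, X_n[j]) = 0 \text{ for every } j \in \Z \,\}.
\]
Because $\Hom_{\da}(-, X_n[j])$ is cohomological, a routine diagram chase in the long exact sequence associated to a triangle shows that $\C$ is closed under cones when the other two vertices lie in $\C$; it is visibly closed under shifts and direct summands, hence thick. By the very definition of the left orthogonal, every $U \in {}^\perp\X$ satisfies $\Hom_{\da}(U, X[j])=0$ for all $X\in\X$ and $j\in\Z$, so ${}^\perp\X \subseteq \C$. Triangulated closure then gives $\langle {}^\perp\X \rangle_{n+1} \subseteq \C$. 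If $M$ were to lie in $\langle {}^\perp\X \rangle_{n+1}$, we would obtain $\varphi = 0$, contradicting the previous step. The dual statement about $\X^\perp$ will follow by the symmetric argument, applied to the morphism $X_n \to M[n+1]$ attached to the coresolution and to the covariant functor $\Hom_{\da}(X_n, -[n+1])$.

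The main delicate point will be the Ext-to-morphism translation: one has to verify that the Yoneda class represented by the prescribed exact sequence really corresponds to $\varphi$ under the canonical isomorphism and, in particular, that its nonvanishing is preserved through the splicing. This is classical, but the bookkeeping with the connecting morphisms should be done carefully. A secondary check is that the paper's convention for $\langle\cdot\rangle_{n+1}$ includes closure under all shifts of the generating subcategory, which is what makes the inclusion ${}^\perp\X \subseteq \C$ propagate to the whole $(n+1)$-th layer; once this is in place, the argument does not in fact require a finer control of the $n+1$ levels beyond the thickness of $\C$.
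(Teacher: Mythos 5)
The proposal has a genuine gap at the crucial step, namely the claim that ${}^\perp\X\subseteq\C$ where $\C=\{A\in\da:\Hom_\da(A,X_n[j])=0\text{ for all }j\in\Z\}$. By definition $U\in{}^\perp\X$ means $\Ext_\A^i(U,X)=0$ for $i\ge1$ and $X\in\X$; under the standard identification $\Hom_\da(U,X[j])\cong\Ext_\A^j(U,X)$ this gives vanishing only for $j\ge1$ (and trivially for $j<0$), but says \emph{nothing} about $j=0$: the group $\Hom_\da(U,X_n)=\Hom_\A(U,X_n)$ is typically nonzero. So ${}^\perp\X$ is not contained in $\C$, and the thick-subcategory argument you propose collapses. (You cannot fix this by weakening $\C$ to allow $j=0$, since that weaker class fails to be closed under cones, hence is not thick.)

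The paper circumvents exactly this difficulty by using the ghost lemma instead of a single vanishing functor. It splices the resolution to get syzygy objects $L_i$, realizes the Yoneda class as a composite $\eta\colon M[-1]\to L_0\to L_1[1]\to\cdots\to L_n[n]=X_n[n]$ of $n+1$ connecting morphisms $\delta_i$, and shows that each $\delta_i[i]$ is a ghost map for $\langle{}^\perp\X\rangle_1$: for $C\in{}^\perp\X$ and $j\in\Z$, the induced map on $\Hom_\da(C[j],-)$ is zero because domain and target are concentrated in disjoint single degrees (this is where the exactness of the sequences built from $\X$-objects pays off, and where the single-layer constraint compensates for the nonvanishing degree-zero Homs). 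The ghost lemma then yields $\Hom_\da(\langle{}^\perp\X\rangle_{n+1},\eta)=0$, and $\eta\ne0$ forces $M\notin\langle{}^\perp\X\rangle_{n+1}$. The finer control of the $n+1$ levels that you dismiss as unnecessary is in fact the essential ingredient; a one-shot thick subcategory cannot do the job here.
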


\noindent
This theorem recovers the lower bounds of the derived dimension given by Rouquier \cite[Proposition 7.14]{R}, Krause and Kussin \cite[Lemma 2.4]{KK} and Yoshiwaki \cite[Theorem 1.1]{Y2}.

\section{Dimension zero versus local finiteness}

\begin{notation}
Throughout this section, let $\T$ be a Krull--Schmidt triangulated category.
We use $k$ and $R$ in this section as an algebraically closed field and a commutative noetherian complete local ring, respectively.
Let $\Lambda$ be a \emph{noetherian $R$-algebra}, i.e., there is a ring homomorphism $R$ to $\Lambda$ with the image contained in the center of $\Lambda$ such that $\Lambda$ is a finitely generated $R$-module.
Note that $\Lambda$ is semiperfect as $R$ is complete (see \cite{A}).
\end{notation}

\subsection{Local finiteness}

We recall the definition of locally finite triangulated categories.

\begin{definition}\label{def:LF}
We say that $\T$ is \emph{locally finite} if for each object $Y$ of $\T$,
\begin{enumerate}[(i)]
\item there are only finitely many indecomposable objects $X$ with $\Hom_\T(X,Y)\neq0$, and
\item for every indecomposable object $X$ of $\T$, the right $\End_\T(X)$-module $\Hom_\T(X, Y)$ has finite length.
\end{enumerate}
Note that these are equivalent to the dual conditions; see \cite{A1, K}.
\end{definition}

For an additive category $\C$ we denote by $\underline\C$ its \emph{stable category}, i.e., the quotient of $\C$ by the projective objects.
This is triangulated if $\C$ is Frobenius \cite{H}.
We denote by $\mod\Lambda$ the category of finitely generated (right) $\Lambda$-modules; it is Frobenius (and hence the stable category $\smod\Lambda$ is triangulated) if $\Lambda$ is selfinjective.
Denote by $\Db(\mod\Lambda)$ the bounded derived category of $\mod\Lambda$.
Here are examples of a locally finite triangulated category.

\begin{example}\label{exLF}
Let $\Lambda$ be a finite dimensional $k$-algebra.
\begin{enumerate}[(1)]
\item
The derived category $\Db(\mod\Lambda)$ is locally finite if and only if $\Lambda$ is a piecewise hereditary algebra of finite representation type.
\item
Suppose that $\Lambda$ is a selfinjective algebra.
Then the stable category $\smod\Lambda$ is locally finite if and only if $\Lambda$ is of finite representation type.
\end{enumerate}
\end{example}

A triangulated category $\T$ is called \emph{finitely generated} if it admits a thick generator $T$, that is, if $\T=\thick T$.
Here, $\thick T$ is the smallest thick subcategory of $\T$ containing $T$.
Important properties of locally finite triangulated categories are stated in \cite{K}, including:

\begin{proposition}\cite[Proposition 4.5]{K}\label{bpLF}
Let $\T$ be a finitely generated locally finite triangulated category.
Then there are only finitely many thick subcategories of $\T$.
\end{proposition}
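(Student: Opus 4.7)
The plan is to reduce the proposition to showing that $\T$ has only finitely many isomorphism classes of indecomposable objects. Since $\T$ is Krull--Schmidt, any thick subcategory $\U\subseteq\T$ is determined by the set of indecomposable objects it contains, so if the total number of indecomposable isomorphism classes is finite, the number of thick subcategories is bounded by a finite power of two.

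To prove this finiteness, let $T$ be a thick generator of $\T$. First, I would show that every indecomposable $X\in\T$ admits a nonzero morphism to, or from, some shift of $T$. Consider the full subcategory $\U\subseteq\T$ of objects $Y$ satisfying $\Hom_{\T}(Y,T[n])=0=\Hom_{\T}(T[n],Y)$ for every $n\in\Z$. Then $\U$ is thick and closed under shifts, and $T\in\U^{\perp}$; since $\U^{\perp}$ is itself thick, $\T=\thick T\subseteq\U^{\perp}$, forcing $\U\subseteq\U^{\perp}$ and hence $\U=0$.

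By Definition~\ref{def:LF}(i) applied to $T$, only finitely many indecomposables $Z_1,\dots,Z_r$ satisfy $\Hom_{\T}(Z_i,T)\neq 0$, and dually only finitely many $W_1,\dots,W_s$ satisfy $\Hom_{\T}(T,W_j)\neq 0$. Combined with the previous step and the identification $\Hom_{\T}(X,T[n])=\Hom_{\T}(X[-n],T)$, every indecomposable of $\T$ is isomorphic to a shift of some $Z_i$ or $W_j$. It then remains to bound, for each $i$ and $j$, the number of distinct isomorphism classes among the shifts $\{Z_i[n]\}_{n\in\Z}$ and $\{W_j[n]\}_{n\in\Z}$. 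One expects this to follow from condition (ii) of local finiteness, which constrains the lengths of the morphism spaces as $\End_{\T}(T)$-modules, together with the finite generation of $\T$.

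The main obstacle is precisely this last step: local finiteness \emph{a priori} gives only pointwise control of $\Hom$-spaces, whereas ruling out infinite suspension orbits among indecomposables is a global statement. The essential extra input is the existence and shape of Auslander--Reiten triangles in a locally finite Krull--Schmidt triangulated category, which forces only finitely many shift orbits of indecomposables to occur once a thick generator is fixed.
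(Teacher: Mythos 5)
Your reduction to the claim that $\T$ has only finitely many isomorphism classes of indecomposable objects is incorrect, and the claim itself is false under the hypotheses. The bounded derived category $\Db(\mod k)$ of a field $k$ is finitely generated (by $k$) and locally finite, but its indecomposables $k[n]$, $n\in\Z$, form infinitely many isomorphism classes. So there is no hope of ``ruling out infinite suspension orbits,'' and the obstacle you identify at the end --- trying to bound the number of distinct shifts $\{Z_i[n]\}$ --- is a sign you are proving something stronger than what is needed (and stronger than what is true). Invoking the shape of Auslander--Reiten triangles will not rescue this step.

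The irony is that you had already done the essential work, and the correct conclusion is easier than the one you were reaching for. You established that every indecomposable object of $\T$ is isomorphic to a shift of one of the finitely many indecomposables $Z_1,\dots,Z_r,W_1,\dots,W_s$. Now simply observe that a thick subcategory $\U$ is closed under shifts, so $\U$ contains $Z_i[n]$ for some $n$ if and only if it contains $Z_i[m]$ for all $m$, and likewise for the $W_j$. Since $\T$ is Krull--Schmidt, $\U$ is determined by the set of indecomposables it contains, hence by the subset of $\{Z_1,\dots,Z_r,W_1,\dots,W_s\}$ whose shift orbits it contains; this gives at most $2^{r+s}$ thick subcategories, with no need to count shifts. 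This corrected argument is in substance the route the paper itself provides as an alternative to citing Krause: Theorem~\ref{main}(1) shows $\T=\langle M\rangle$ for a single object $M$ with finitely many indecomposable summands, and the proposition immediately following Theorem~\ref{main} shows that any triangulated category of dimension zero has only finitely many thick subcategories, precisely by the shift-orbit argument above.
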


In representation theory of rings, the notion of representation types is one of the most classical subjects, and the first step is to understand finite representation type.
We study the relationship of local finiteness with finiteness of representation type.
For an additive category $\C$, $\ind\C$ denotes the set of isomorphism classes of indecomposable objects of $\C$.

\begin{proposition}\label{LFRF}
Let $\F$ be a Krull--Schmidt Frobenius category whose stable category $\sF$ satisfies Definition \ref{def:LF}(ii).
Assume that $\proj\F=\{X\in \F\ |\ \Ext_\F^1(M,X)=0 \}$ for some $M\in\F$.
Then $\sF$ is locally finite if and only if it admits an additive generator.
\end{proposition}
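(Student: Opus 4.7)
The plan is to prove the two implications separately. The backward implication is essentially formal: if $\sF$ admits an additive generator $G$, then by Krull--Schmidt every indecomposable object of $\sF$ is isomorphic to a summand of $G$, so $\sF$ has only finitely many indecomposables up to isomorphism. Condition (i) of Definition \ref{def:LF} is then automatic, and condition (ii) holds by hypothesis, so $\sF$ is locally finite.

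For the forward direction, the first step is to translate the $\Ext^1$-hypothesis on $M$ into a statement inside the stable category. Since $\F$ is Frobenius, there is a natural isomorphism $\Ext_\F^1(M,X) \cong \Hom_{\sF}(M, X[1])$, where $[1]$ denotes the suspension of $\sF$. The hypothesis $\proj\F = \{X\in\F \mid \Ext_\F^1(M,X)=0\}$ therefore becomes: an object $X$ of $\F$ is projective if and only if $\Hom_{\sF}(M, X[1])=0$. Since the projectives are precisely the zero objects of $\sF$ and $[1]$ is an autoequivalence of $\sF$, this amounts to saying that every nonzero indecomposable $Y$ of $\sF$ satisfies $\Hom_{\sF}(M, Y)\neq 0$.

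The second step invokes local finiteness. By the remark in Definition \ref{def:LF} that the listed conditions are equivalent to their duals (cf.\ \cite{A1, K}), applying local finiteness to the single object $M$ yields that only finitely many indecomposable $Y\in\sF$ have $\Hom_{\sF}(M,Y)\neq 0$. Combined with the preceding step, this forces $\sF$ to have only finitely many indecomposables up to isomorphism, and the direct sum of representatives will be the required additive generator. The only genuinely nontrivial point, and thus the main ``obstacle,'' is to recognise that the technical-looking hypothesis on $M$ is exactly the assertion that $\Hom_{\sF}(M,-)$ detects nonzero objects after a shift; once this is noted, the conclusion follows from a single application of the dual formulation of local finiteness.
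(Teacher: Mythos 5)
Your proof is correct and follows essentially the same route as the paper: translate the hypothesis $\proj\F=\{X\mid\Ext_\F^1(M,X)=0\}$ into the statement that $\sHom_\F(M[-1],-)$ (equivalently $\sHom_\F(M,-)$ after a shift by the autoequivalence) detects the nonzero indecomposables of $\sF$, then invoke the dual form of condition (i) of Definition \ref{def:LF} applied to the single object $M$ to conclude finiteness. The only cosmetic difference is that you phrase the finiteness in terms of $\sHom_\F(M,Y)$ while the paper works directly with $\sHom_\F(M[-1],X)\cong\Ext_\F^1(M,X)$.
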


\begin{proof}
The `if' part is evident.
For the `only if' part, the local finiteness of $\sF$ implies that
$$
\X=\{X\in\ind\sF\ |\ \sHom_\F(M[-1], X)\ne0\}
$$
is a finite set; we write $\X=\{X_1,\dots,X_n\}$.
Let $Y$ be an indecomposable object of $\F$ which does not belong to $\X$.
Then $\Ext_\F^1(M, Y)$ vanishes.
By assumption $Y$ has to be projective, which implies that $Y=0$ in $\sF$.
Thus, we obtain $\sF=\add(X_1\oplus\cdots\oplus X_n)$.
\end{proof}

For a prime ideal $\p$ of $R$, set $(-)_\p:=-\otimes_RR_\p$.
We say that $\Lambda$ has an \emph{isolated singularity} if $\gldim\Lambda_\p=\dim R_\p$ for any nonmaximal primes $\p$ of $R$ (cf. \cite{IW}).
Denote by $\CM(\Lambda)$ the full subcategory of $\mod\Lambda$ consisting of \emph{Cohen--Macaulay $\Lambda$-modules}, i.e., finitely generated $\Lambda$-modules $X$ with $\Ext_\Lambda^i(X, \Lambda)=0$ for all $i>0$.
We say that $\Lambda$ is \emph{Iwanaga--Gorenstein} if it has finite right and left selfinjective dimension.
An Iwanaga--Gorenstein algebra $\Lambda$ is said to have \emph{finite Cohen--Macaulay (abbr. CM) representation type} if $\CM(\Lambda)$ has an additive generator.
Let us recall a fundamental fact.

\begin{proposition}\label{koremo}
Let $\Lambda$ be an Iwanaga--Gorenstein $R$-algebra with an isolated singularity.
Then the stable category $\sCM(\Lambda)$ is a Krull--Schmidt triangulated category whose Hom-sets have finite length as $R$-modules.
\end{proposition}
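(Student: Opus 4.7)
The plan is to verify three separate pieces: that $\CM(\Lambda)$ carries a Frobenius exact structure so its stable category is triangulated, that $\sCM(\Lambda)$ is Krull--Schmidt, and that its Hom-spaces have finite length over $R$.

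First I would check that $\CM(\Lambda)$ is a Frobenius exact category (with the exact structure inherited from $\mod\Lambda$) whose projective-injective objects are precisely the finitely generated projective $\Lambda$-modules. This is a standard consequence of the Iwanaga--Gorenstein hypothesis on both sides, and by Happel's theorem it endows $\sCM(\Lambda)$ with a triangulated structure. For the Krull--Schmidt property, the key input, recorded in the notation at the start of this section, is that $\Lambda$ is semiperfect because $R$ is complete. Hence $\mod\Lambda$ is Krull--Schmidt, the full subcategory $\CM(\Lambda)$ inherits this, and the quotient by the direct-summand-closed subcategory $\proj\Lambda$ remains Krull--Schmidt.

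The heart of the proposition is the finite length assertion, and this is where the isolated singularity hypothesis enters. For $M,N\in\CM(\Lambda)$, the $R$-module $\Hom_\Lambda(M,N)$ is finitely generated, hence so is its subquotient $\sHom_\Lambda(M,N)$. It therefore suffices to show that the latter is supported only at the maximal ideal of $R$. For a nonmaximal prime $\mathfrak{p}$ of $R$, localization is flat and commutes with Hom for finitely presented $M$, and sends projectives to projectives, so one obtains an isomorphism $\sHom_\Lambda(M,N)_{\mathfrak{p}}\cong\sHom_{\Lambda_{\mathfrak{p}}}(M_{\mathfrak{p}},N_{\mathfrak{p}})$. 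The isolated singularity assumption gives $\gldim\Lambda_{\mathfrak{p}}<\infty$, and combined with the defining Ext-vanishing $\Ext^i_{\Lambda_{\mathfrak{p}}}(M_{\mathfrak{p}},\Lambda_{\mathfrak{p}})=0$ for $i>0$, a standard splitting argument applied to a finite projective resolution of $M_{\mathfrak{p}}$ forces $M_{\mathfrak{p}}$ to be projective over $\Lambda_{\mathfrak{p}}$. Hence the stable Hom vanishes at $\mathfrak{p}$, so $\sHom_\Lambda(M,N)$ is a finitely generated $R$-module supported at a single maximal ideal, and therefore has finite length.

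The step I expect to be the most delicate is the compatibility of the stable Hom with localization, i.e.\ verifying that a morphism between $M_{\mathfrak{p}}$ and $N_{\mathfrak{p}}$ which factors through some projective $\Lambda_{\mathfrak{p}}$-module actually comes from a morphism over $\Lambda$ that factors through a projective; once this bookkeeping is under control, the remaining ingredients (Frobenius structure, descent of Krull--Schmidt, and the ``finite global dimension plus Ext-vanishing implies projective'' lemma) are essentially formal.
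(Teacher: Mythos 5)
Your proof is correct and follows essentially the same strategy as the paper: verify the Frobenius structure and inherited Krull--Schmidt property, then localize at a nonmaximal prime and use the isolated singularity hypothesis to force the stable Hom to vanish there, so that $\sHom_\Lambda(M,N)$ is a finitely generated $R$-module supported only at the maximal ideal. The only variation is in the final computation: where you show $M_\p$ is projective over $\Lambda_\p$ (using $\gldim\Lambda_\p<\infty$ together with $\Ext^{>0}_{\Lambda_\p}(M_\p,\Lambda_\p)=0$), the paper instead shifts by $d=\dim R$ and observes $\sHom_{\Lambda_\p}(M_\p,N_\p[d])\cong\Ext^d_{\Lambda_\p}(M_\p,N_\p)=0$ purely for degree reasons since $\gldim\Lambda_\p<d$, then deduces finite length of all Hom-sets from the one in degree $d$ by applying the shift autoequivalence.
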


\begin{proof}
As is well-known, $\CM(\Lambda)$ is Frobenius, and $\sCM(\Lambda)$ is triangulated \cite{H}.
Moreover, $\CM(\Lambda)$ (and $\sCM(\Lambda)$) inherits the Krull--Schmidt property from $\mod\Lambda$.
Let $M$ and $N$ be in $\CM(\Lambda)$.
Put $d:=\dim R$ and take a nonmaximal prime ideal $\p$ of $R$.
We get isomorphisms
\[\sHom_\Lambda(M, N[d])_\p\simeq \sHom_{\Lambda_\p}(M_\p, N_\p[d])\simeq \Ext_{\Lambda_\p}^d(M_\p, N_\p)=0.\]
Here, the first isomorphism comes from the fact that the functor $(-)_\p$ is exact and the last equality holds since $\gldim \Lambda_\p=\dim R_\p<d$.
Therefore, we see that $\sHom_\Lambda(M, N[d])$ has finite length as an $R$-module,
whence so does any Hom-set of $\sCM(\Lambda)$.
\end{proof}

We deduce the following from Propositions \ref{LFRF} and \ref{koremo}, which extends Example \ref{exLF}(2).

\begin{theorem}\label{Glr}
Let $\Lambda$ be an Iwanaga--Gorenstein $R$-algebra with an isolated singularity.
Then $\sCM(\Lambda)$ is locally finite if and only if $\Lambda$ has finite CM representation type. 
\end{theorem}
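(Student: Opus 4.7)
The plan is to derive the theorem by applying Proposition~\ref{LFRF} to the Frobenius category $\F=\CM(\Lambda)$, whose stable category is $\sCM(\Lambda)$. The ``if'' direction is straightforward: when $\Lambda$ has finite CM representation type there is an additive generator $N\in\CM(\Lambda)$, so $\sCM(\Lambda)=\add(\bar N)$; combined with the finite length Hom-sets supplied by Proposition~\ref{koremo}, both conditions of Definition~\ref{def:LF} are immediate.

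For the ``only if'' direction, the strategy is to verify the hypotheses of Proposition~\ref{LFRF}. The category $\CM(\Lambda)$ is Krull--Schmidt Frobenius by Proposition~\ref{koremo} (semiperfectness of $\Lambda$ and the Iwanaga--Gorenstein property give Krull--Schmidt and Frobenius, respectively). Condition (ii) of Definition~\ref{def:LF} for $\sCM(\Lambda)$ also follows from Proposition~\ref{koremo}: the Hom-sets are of finite length over $R$, and since $R$ acts through the center of each $\End_{\sCM(\Lambda)}(X)$, every $\End_{\sCM(\Lambda)}(X)$-submodule is automatically an $R$-submodule, so finite $R$-length forces finite $\End$-length.

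The main obstacle is to produce a module $M\in\CM(\Lambda)$ satisfying the remaining hypothesis $\proj\CM(\Lambda)=\{X\in\CM(\Lambda)\mid\Ext_\Lambda^1(M,X)=0\}$. A natural candidate is $M=\Omega^d(\Lambda/\rad\Lambda)$ with $d\ge\id_\Lambda\Lambda$; this lies in $\CM(\Lambda)$ because sufficiently high syzygies over an Iwanaga--Gorenstein algebra are Cohen--Macaulay, and one has $\Ext_\Lambda^1(M,X)\cong\Ext_\Lambda^{d+1}(\Lambda/\rad\Lambda,X)$, which vanishes for any projective $X$ (since $\id_\Lambda\Lambda\le d$). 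The delicate point is the converse: showing that $\Ext_\Lambda^{d+1}(\Lambda/\rad\Lambda,X)\ne0$ for every non-projective $X\in\CM(\Lambda)$. This is where the isolated singularity assumption enters decisively; I expect it to follow from Tate cohomology together with Auslander--Reiten duality on $\sCM(\Lambda)$ (which exists and is $R$-finite in this setting by Proposition~\ref{koremo}), forcing a nonzero object of the stable category to have non-vanishing Tate-$\Ext$ against $\Lambda/\rad\Lambda$ in the appropriate degree.

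Once such an $M$ is in hand, Proposition~\ref{LFRF} yields an additive generator $\bar N\in\sCM(\Lambda)$; lifting to $N\in\CM(\Lambda)$, the module $N\oplus\Lambda$ is an additive generator of $\CM(\Lambda)$, which is exactly the statement that $\Lambda$ has finite CM representation type.
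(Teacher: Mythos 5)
Your proposal follows the paper's route: both invoke Proposition~\ref{LFRF} with $\F=\CM(\Lambda)$ and both take $M=\Omega^d(\Lambda/\rad\Lambda)$ with $d=\id\Lambda_\Lambda$ as the test module, reducing the problem to showing that $\Ext_\Lambda^{d+1}(\Lambda/\rad\Lambda,X)=0$ forces a Cohen--Macaulay module $X$ to be projective. The preliminary remarks --- that Hom-sets in $\sCM(\Lambda)$ have finite $R$-length by Proposition~\ref{koremo}, and that finite $R$-length passes to finite $\End$-length because $R$ acts through the center --- are correct and match what the paper implicitly uses.

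However, there is a genuine gap at precisely the decisive step. You write that you ``expect'' the implication ($\Ext_\Lambda^{d+1}(\Lambda/\rad\Lambda,X)\neq 0$ for every non-projective $X\in\CM(\Lambda)$) to follow from Tate cohomology and Auslander--Reiten duality, but no argument is given, and it is not clear that this machinery yields the conclusion without substantial additional work: AR duality controls stable Hom-sets, and relating those to the ordinary $\Ext^{d+1}$ of the semisimple top and then concluding projectivity is exactly what needs to be proved, not assumed. The paper instead appeals to a classical theorem of Auslander on Bass numbers, in the form of \cite[Corollary 3.5(3)]{GN}, which says that the vanishing $\Ext_\Lambda^{d+1}(\Lambda/\rad\Lambda,X)=0$ forces $X$ to have finite injective dimension; combined with $X\in\CM(\Lambda)$ over the Iwanaga--Gorenstein algebra $\Lambda$, this implies $X$ is projective. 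Replacing your speculative paragraph with that citation closes the gap and makes your argument essentially identical to the paper's.
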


\begin{proof}
Put $S:=\Lambda/\rad\Lambda$, where $\rad\Lambda$ stands for the Jacobson radical of $\Lambda$.
As $d:=\id\Lambda_\Lambda<\infty$, we observe that $\Omega^dS$ belongs to $\CM(\Lambda)$.
We show that $\Omega^dS$ plays the role of $M$ as in Proposition \ref{LFRF}.
Let $X\in\CM(\Lambda)$ satisfy $\Ext_\Lambda^1(\Omega^dS, X)=0$.
Then $\Ext_\Lambda^{d+1}(S, X)=0$.
Thanks to Auslander's result, $X$ has finite injective dimension; see \cite[Corollary 3.5(3)]{GN} for instance.
Hence $X$ is projective.
Now Propositions \ref{LFRF} and \ref{koremo} complete the proof.
\end{proof}

\subsection{Dimension zero}

For $X\in\T$ set $\langle X\rangle:=\add\{X[i]\mid i\in\mathbb{Z} \}$.
We say \emph{$\T$ has dimension zero} and write $\dim\T=0$, if $\T=\langle X\rangle$ for some $X\in\T$.
The following are well-known.

\begin{example}\label{exd0}
Let $\Lambda$ be a finite dimensional $k$-algebra.
\begin{enumerate}[(1)]
\item\cite[Theorem]{CYZ}
One has $\dim\Db(\mod\Lambda)=0$ if and only if $\Lambda$ is a piecewise hereditary algebra of finite representation type.
\item\cite[Corollary 3.10]{Y}
Suppose that $\Lambda$ is selfinjective.
Then $\dim(\smod\Lambda)=0$ if and only if $\Lambda$ is of finite representation type.
\end{enumerate}
\end{example}

\noindent
Comparing this example with Example \ref{exLF},
we have a natural question.

\begin{question}\label{question}
Is it true that $\T$ is locally finite if and only if it has dimension zero?
\end{question}

A $k$-linear triangulated category $\T$ is called \emph{Ext-finite} if $\sum_{i\in\mathbb{Z}}\dim_k\Hom_\T(X, Y[i])<\infty$ for any objects $X$ and $Y$ of $\T$.
Here are our answers to Question \ref{question}. 

\begin{theorem}\label{main}
\begin{enumerate}[\rm(1)]
\item If $\T$ is finitely generated and locally finite, then it has dimension $0$.

\item Assume that $\T$ is Ext-finite.
If $\T$ has dimension $0$, then it is locally finite.
\end{enumerate}
\end{theorem}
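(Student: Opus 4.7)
I would handle (1) and (2) separately; (1) is the substantive half, while (2) is essentially a direct check against Definition \ref{def:LF}.

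For (1), assume $\T = \thick T$ and decompose $T = T_1 \oplus \cdots \oplus T_n$ into indecomposables by Krull--Schmidt. My candidate zero-dimensional generator is the direct sum over
\[
\X := \{W \in \ind\T \mid \Hom_\T(W, T_i) \neq 0 \text{ for some } i\},
\]
which is a finite set by Definition \ref{def:LF}(i). The crux is to show that every indecomposable $Y \in \T$ is isomorphic to a shift of some member of $\X$, or equivalently that $\Hom_\T(Y, T_i[k]) \neq 0$ for some $i$ and $k$. If this failed, the full subcategory
\[
\mathcal{S} = \{Z \in \T \mid \Hom_\T(Y, Z[j]) = 0 \text{ for all } j \in \Z\}
\]
would be closed under shifts, summands, and---via the long exact sequence on $\Hom_\T(Y, -)$ applied to any triangle---cones, hence would be thick. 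Since $\mathcal{S}$ contains every $T_i$, it would contain $\thick T = \T$, putting $Y \in \mathcal{S}$ and forcing $\End_\T(Y) = 0$, contradicting indecomposability in a Krull--Schmidt category. Writing $G := \bigoplus_{W \in \X} W$, we then obtain $\T = \langle G \rangle$ and $\dim\T = 0$.

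For (2), I would write $\T = \langle X \rangle$ with $X = X_1 \oplus \cdots \oplus X_r$ decomposed into indecomposables, so that every indecomposable of $\T$ is isomorphic to some shift $X_s[i]$. Fixing an object $Y$, Ext-finiteness applied to each pair $(X_s, Y)$ produces only finitely many integers $i$ with $\Hom_\T(X_s[-i], Y) = \Hom_\T(X_s, Y[i]) \neq 0$; summing over the finitely many values of $s$ yields condition (i) of Definition \ref{def:LF}. Condition (ii) is immediate because every $\Hom_\T(W, Y)$ is already a finite dimensional $k$-vector space and $\End_\T(W)$-submodules are in particular $k$-subspaces, hence of bounded length.

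The step I would be most careful about is the thickness of $\mathcal{S}$ in (1): one must kill all shifts $Z[j]$ simultaneously in the definition so that the long exact sequence on $\Hom_\T(Y, -)$ really closes $\mathcal{S}$ under cones rather than under only half of the triangulated structure. I would also note that this proof does not invoke Proposition \ref{bpLF}---the zero-dimensional generator is obtained directly from the thick generator together with local finiteness, without needing to enumerate thick subcategories.
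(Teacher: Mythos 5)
Your proof is correct, and in essence it follows the same path as the paper: pick a thick generator $T$, use local finiteness to extract a finite set of indecomposables that ``see'' $T$, and check that every indecomposable is a shift of one of these; then for (2), decompose the generator into indecomposables and invoke Ext-finiteness. The one genuine divergence is in (1): you collect indecomposables $W$ with $\Hom_\T(W,T_i)\neq 0$, which reads off Definition \ref{def:LF}(i) directly with $Y=T_i$, whereas the paper collects indecomposables $N$ with $\Hom_\T(T,N)\neq 0$ and thus implicitly leans on the remark in Definition \ref{def:LF} that local finiteness is equivalent to its dual form (the paper cites \cite{A1,K} for this). Your version sidesteps that dependence, at the negligible cost of having to write the shift bookkeeping in the opposite direction; both arguments are otherwise the same length and use the same thick-subcategory observation to show no indecomposable can be invisible to $T$. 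Your remark that neither proof needs Proposition \ref{bpLF} is accurate but not a point of contrast, since the paper's proof does not use it either.
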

\begin{proof}
(1) Let $T$ be a thick generator of $\T$ and put $\M:=\{M\in \ind\T\ |\ \Hom_\T(T, M)\neq0\}$.
Let $N$ be an indecomposable object of $\T$.
Since $T$ is a thick generator, we observe that if $\Hom_\T(T, N[i])=0$ for all integers $i$, then $N=0$, contrary to the choice of $N$.
Hence $N[\ell]$ belongs to $\M$ for some integer $\ell$.
This shows that every object of $\T$ is in $\langle \M\rangle$, and thus $\T=\langle \M\rangle$.
As $\T$ is locally finite, $\M$ is a finite set.
Therefore, putting $M$ as the direct sum of all objects in $\M$, one has $\T=\langle M\rangle$, and obtains $\dim\T=0$.

(2) As $\dim\T=0$, there is an object $M\in\T$ with $\T=\langle M \rangle$.
Taking an indecomposable decomposition $M=M_1\oplus M_2\oplus\cdots\oplus M_r$, we obtain $\ind\T=\{M_i[j]\ |\ i=1,\dots,r\ \mbox{and } j\in\mathbb{Z} \}$ by the Krull--Schmidtness of $\T$.
Let $Y\in\T$.
Since $\T$ is Ext-finite, there exist only finitely many indecomposable objects $X$ with $\Hom_\T(X, Y)\neq0$,
whence $\T$ is locally finite.
\end{proof}

The following is an extension of Proposition \ref{bpLF} from the viewpoint of Theorem \ref{main}(1).

\begin{proposition}
If $\dim\T=0$, then $\T$ possesses only fnitely many thick subcategories.
\end{proposition}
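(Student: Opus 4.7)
The plan is to show that any thick subcategory of $\T$ is determined by the finite collection of indecomposable shift-orbits it contains, so that the number of such subcategories is bounded above by a finite power of $2$.

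First, since $\dim\T=0$, I would write $\T=\langle M\rangle$ for some $M\in\T$ and take an indecomposable decomposition $M=M_1\oplus\cdots\oplus M_r$. Exactly as in the proof of Theorem \ref{main}(2), the Krull--Schmidt property then gives
\[
\ind\T=\{M_i[j]\mid 1\le i\le r,\ j\in\mathbb{Z}\},
\]
so $\T$ has only finitely many indecomposable objects up to shift.

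Next, to each thick subcategory $\U$ of $\T$ I would associate the set $I(\U):=\{i\in\{1,\dots,r\}\mid M_i\in\U\}$, and argue that the map $\U\mapsto I(\U)$ is injective. Any object $X\in\U$ lies in $\T=\add\{M[i]\mid i\in\mathbb{Z}\}$, hence decomposes into a finite direct sum of shifts of the $M_i$'s; since $\U$ is closed under direct summands and shifts, every such summand $M_i[j]$ forces $M_i\in\U$, i.e., $i\in I(\U)$. Conversely, $\U$ contains every $M_i[j]$ with $i\in I(\U)$, and therefore all finite direct sums and direct summands of such objects. Thus $\U=\add\{M_i[j]\mid i\in I(\U),\ j\in\mathbb{Z}\}$ is recovered from $I(\U)$.

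Since $\{1,\dots,r\}$ has only $2^r$ subsets, there are at most $2^r$ thick subcategories of $\T$. There is no real obstacle here; the point to be attentive to is that one does not need to invoke closure under cones at all, because shift- and summand-closure already suffice once the indecomposables of $\T$ have been pinned down as shifts of the finite list $M_1,\dots,M_r$.
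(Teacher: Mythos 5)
Your proof is correct and follows essentially the same route as the paper: decompose a generator into indecomposables $M_1,\dots,M_r$, then show that a thick subcategory $\U$ is recovered from the set of indices $i$ with $M_i\in\U$, giving the bound $2^r$. The observation that thickness beyond shift- and summand-closure is never used is a nice remark, but the argument itself is the paper's.
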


\begin{proof}
By assumption, we have $\T=\langle X\rangle$ for some $X\in\T$.
Write $X=X_1\oplus X_2\oplus\cdots\oplus X_n$ with each $X_i$ indecomposable.
Let $\U$ be a thick subcategory of $\T$.
Then we may assume that $X_1,\dots,X_m$ are in $\U$, and $X_{m+1},\dots,X_n$ are not in $\U$.
Now it is evident that $\U=\langle X_1\oplus\cdots\oplus X_m\rangle$, and the assertion follows.
\end{proof}

\subsection{Applications}

In this subsection, let $\Lambda$ be an Iwanaga--Gorenstein $R$-algebra with an isolated singularity.
Here is a conjecture, which seems to be folklore:

\begin{conjecture}\label{conjecture}
The stable category $\sCM(\Lambda)$ has dimension zero if and only if the algebra $\Lambda$ has finite CM representation type.
\end{conjecture}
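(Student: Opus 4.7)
The plan is to reduce the conjecture to Theorem \ref{Glr} by proving that $\dim\sCM(\Lambda)=0$ is equivalent to $\sCM(\Lambda)$ being locally finite.

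For the implication ``finite CM representation type $\Rightarrow\dim\sCM(\Lambda)=0$'', write $\CM(\Lambda)=\add G$ for an additive generator $G$. Passing to the stable category we obtain $\sCM(\Lambda)=\add G=\thick G$, so $\sCM(\Lambda)$ is finitely generated. Theorem \ref{Glr} yields local finiteness of $\sCM(\Lambda)$, and then Theorem \ref{main}(1) gives $\dim\sCM(\Lambda)=0$.

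For the converse, assume $\sCM(\Lambda)=\langle N\rangle$ with an indecomposable decomposition $N=N_1\oplus\cdots\oplus N_r$. The plan is to verify the two conditions of Definition \ref{def:LF} and then invoke Theorem \ref{Glr}. Condition (ii) is immediate from Proposition \ref{koremo}: any Hom-set has finite length as an $R$-module, hence also as a module over the larger ring $\End_{\sCM(\Lambda)}(X)$, since every $\End$-submodule is in particular an $R$-submodule. For condition (i), the Krull--Schmidt property together with $\sCM(\Lambda)=\add\{N[j]\mid j\in\mathbb{Z}\}$ implies that every indecomposable has the form $N_i[j]$. Given $Y\in\sCM(\Lambda)$, write $Y$ as a summand of some $\bigoplus_{s=1}^{t}N[m_s]$; then $\Hom(N_i[j],Y)\neq 0$ forces $m_s-j\in S_i:=\{c\in\mathbb{Z}\mid\Hom_{\sCM(\Lambda)}(N_i,N[c])\neq 0\}$ for some $s$. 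Thus condition (i) reduces to the following key claim: each set $S_i$ is finite.

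The finiteness of $S_i$ is an Ext-finiteness assertion for the generator $N$, and it constitutes the main obstacle. Since $\sCM(\Lambda)$ is only Hom-finite over $R$, not visibly $k$-linear, Theorem \ref{main}(2) does not apply directly. The natural route is to exploit the Auslander--Reiten structure: for an Iwanaga--Gorenstein $R$-algebra with isolated singularity, $\sCM(\Lambda)$ admits AR triangles, and Matlis duality furnishes a Serre-type functor identifying $\Hom_{\sCM(\Lambda)}(N_i,N[c])$ with the Matlis dual of a Hom-set in the opposite direction. Combining this duality with a uniform upper bound on the relevant Ext groups---plausibly extracted from the selfinjective dimension $\id\Lambda_\Lambda$---should compress $S_i$ to a finite interval. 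The strategy succeeds cleanly in the isolated hypersurface case of Corollary \ref{ihs} thanks to $2$-periodicity of $\Omega$. Making such a bound work uniformly across all indecomposable summands of $N$ in the general Iwanaga--Gorenstein setting is the step I expect to be the genuine obstacle, consistent with the conjectural status of the statement.
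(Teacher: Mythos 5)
The statement you are addressing is labelled a \emph{conjecture} in the paper, and no proof of it is offered there; what the paper supplies on this point is Corollary \ref{questionconjecture}, which records that the conjecture for $\Lambda$ is equivalent to the affirmative answer to Question \ref{question} for $\T=\sCM(\Lambda)$, together with the hypersurface case (Corollary \ref{ihs}), which is settled by importing results from \cite{DT}. Your forward implication is correct and assembles exactly the paper's machinery: finite CM representation type gives an additive generator $G$, so $\sCM(\Lambda)=\add\underline{G}=\thick\underline{G}$ is finitely generated; Theorem \ref{Glr} gives local finiteness, and Theorem \ref{main}(1) then gives dimension zero. Your disposal of Definition \ref{def:LF}(ii) via Proposition \ref{koremo} is also sound: every right $\End_{\sCM(\Lambda)}(X)$-submodule of a Hom-set is in particular an $R$-submodule (as $R$ maps centrally into $\End_{\sCM(\Lambda)}(X)$), so finite $R$-length passes down.

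The converse is precisely the conjectural content, and you have located the gap exactly: you need each set $S_i=\{c\in\mathbb{Z}\mid\Hom_{\sCM(\Lambda)}(N_i,N[c])\neq0\}$ to be finite, i.e.\ an Ext-finiteness statement for $\sCM(\Lambda)$. This is the very hypothesis that Theorem \ref{main}(2) is forced to assume rather than derive, and in the general Iwanaga--Gorenstein setting it is not known. The AR/Matlis duality you invoke relates $\Hom_{\sCM(\Lambda)}(N_i,N[c])$ to a Hom-group in the opposite orientation but does not on its own truncate the range of $c$, and the finite selfinjective dimension of $\Lambda$ bounds Ext on the abelian level, not the negative stable Ext groups. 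The paper circumvents this only for isolated hypersurface singularities, where it simply cites \cite[Propositions~2.4(2) and 2.5]{DT} (Corollary \ref{ihs}), and otherwise leaves the statement open. Your proposal is therefore consistent with the paper's framing, but the step you flag is a genuine and currently open gap rather than an omission a more careful argument could fill.
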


In view of Theorem \ref{Glr}, we get the following.

\begin{corollary}\label{questionconjecture}
Question \ref{question} is affirmative for $\T=\sCM(\Lambda)$ if and only if Conjecture \ref{conjecture} holds true for $\Lambda$.
\end{corollary}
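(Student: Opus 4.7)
The plan is to derive the corollary as a direct logical consequence of Theorem \ref{Glr}, which has already been established. Since Question \ref{question} asks whether local finiteness and dimension zero are equivalent for $\T = \sCM(\Lambda)$, and since Theorem \ref{Glr} already identifies local finiteness of $\sCM(\Lambda)$ with finite CM representation type of $\Lambda$, the question reduces to whether finite CM representation type is equivalent to $\sCM(\Lambda)$ having dimension zero—and this is exactly the content of Conjecture \ref{conjecture}.

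Concretely, I would verify both directions of the biconditional as follows. For the ``only if'' direction, suppose Question \ref{question} has a positive answer for $\T = \sCM(\Lambda)$, so that $\sCM(\Lambda)$ is locally finite if and only if $\dim \sCM(\Lambda) = 0$. Chaining this equivalence with Theorem \ref{Glr} yields that $\Lambda$ has finite CM representation type if and only if $\dim \sCM(\Lambda) = 0$, which is Conjecture \ref{conjecture}. For the ``if'' direction, assume Conjecture \ref{conjecture} holds; then $\dim \sCM(\Lambda) = 0$ iff $\Lambda$ has finite CM representation type, and by Theorem \ref{Glr} this chains with ``$\sCM(\Lambda)$ is locally finite'', giving the equivalence asserted in Question \ref{question} for this $\T$.

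Since every step is a transitive substitution of equivalences, there is no genuine obstacle in the argument and essentially nothing to compute. The only mathematical content has already been packaged into Theorem \ref{Glr}; the corollary is a formal bookkeeping statement recording how, for the class $\T = \sCM(\Lambda)$, the open Question \ref{question} is exactly parametrized by the open Conjecture \ref{conjecture}. In the write-up I would therefore keep the proof extremely short, simply citing Theorem \ref{Glr} and performing the substitution in both directions.
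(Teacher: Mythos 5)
Your argument is correct and is exactly what the paper does: the corollary follows by chaining the equivalence of Theorem \ref{Glr} (local finiteness $\Leftrightarrow$ finite CM representation type) with the two open statements. The paper's own proof is the single sentence ``In view of Theorem \ref{Glr}, we get the following,'' so you have simply made the same transitive substitution explicit.
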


This gives a positive answer to Questoin \ref{question} in the case where $R$ is a hypersurface.

\begin{corollary}\label{ihs}
The following are equivalent for an isolated hypersurface singularity $R$:
\begin{enumerate}[\rm(i)]
\item $\sCM(R)$ is locally finite;
\item $\sCM(R)$ has dimension zero;
\item $R$ is of finite CM representation type.
\end{enumerate}
In particular, Question \ref{question} is affirmative for $\T=\sCM(R)$.
\end{corollary}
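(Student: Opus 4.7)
The plan is to combine Theorem \ref{Glr}, which already handles (i) $\Leftrightarrow$ (iii), with Eisenbud's periodicity for matrix factorizations on a hypersurface to close the triangle of equivalences. The ``in particular'' clause will then follow directly from (i) $\Leftrightarrow$ (ii), which is exactly Question \ref{question} specialized to $\T=\sCM(R)$ (equivalently, via Corollary \ref{questionconjecture}, from the verification of Conjecture \ref{conjecture} for hypersurface $R$).

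First I would note that (i) $\Leftrightarrow$ (iii) is an immediate specialization of Theorem \ref{Glr} to the commutative Iwanaga--Gorenstein $R$-algebra $\Lambda=R$. The implication (iii) $\Rightarrow$ (ii) is also formal: if $R$ has finite CM representation type with additive generator $X$ of $\CM(R)$, then $\sCM(R)=\add X\subseteq\langle X\rangle$, forcing $\dim\sCM(R)=0$.

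The only substantive implication is (ii) $\Rightarrow$ (iii). Assume $\sCM(R)=\langle X\rangle$ for some $X\in\sCM(R)$. The key input I would invoke is Eisenbud's theorem on matrix factorizations: for an isolated hypersurface singularity $R$, every maximal Cohen--Macaulay $R$-module has a $2$-periodic minimal free resolution, so $\Omega^2 M\cong M$ in $\sCM(R)$ for each $M\in\CM(R)$. Since the suspension in $\sCM(R)$ is the cosyzygy functor $\Omega^{-1}$, this translates into $[2]\cong\mathrm{id}$ on $\sCM(R)$. Therefore $\{X[i]\mid i\in\Z\}$ reduces, up to isomorphism, to just $\{X,X[1]\}$, so that
\[
\sCM(R)=\langle X\rangle=\add(X\oplus X[1]).
\]
Thus $\sCM(R)$ admits an additive generator, i.e., $R$ is of finite CM representation type.

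No genuine obstacle arises in the argument: the entire content of (ii) $\Rightarrow$ (iii) is carried by the $2$-periodicity coming from matrix factorizations, while the remaining implications are formal consequences of Theorem \ref{Glr} and the definitions. The final sentence of the corollary is then read off from the equivalence of (i) and (ii), which is precisely the conclusion of Question \ref{question} for $\T=\sCM(R)$.
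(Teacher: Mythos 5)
Your proof is correct, and it takes a genuinely different route from the paper's for the non-formal implication. The paper disposes of the whole corollary by citing Dao--Takahashi \cite[Propositions 2.4(2) and 2.5]{DT} to verify Conjecture \ref{conjecture} for $\Lambda=R$ in the hypersurface case, and then reads the statement off of Corollary \ref{questionconjecture} together with Theorem \ref{Glr}. You instead unpack the only substantive implication (ii)\,$\Rightarrow$\,(iii) directly: Eisenbud's $2$-periodicity of matrix factorizations over the (complete) hypersurface $R$ gives $[2]\cong\operatorname{id}$ on $\sCM(R)$, so $\langle X\rangle=\add(X\oplus X[1])$ has only finitely many indecomposables, and adjoining the free module $R$ shows $\CM(R)$ has an additive generator. (One should also note, as you implicitly do, the Krull--Schmidt bijection between indecomposables of $\sCM(R)$ and non-free indecomposables of $\CM(R)$, so that finiteness lifts from the stable to the unstable category.) Your argument is more self-contained and shows exactly where the hypersurface hypothesis enters (via Eisenbud periodicity), whereas the paper's citation of \cite{DT} is shorter and keeps the exposition uniform with the surrounding discussion of Conjecture \ref{conjecture}. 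Both establish (i)\,$\Leftrightarrow$\,(iii) by specializing Theorem \ref{Glr}, so the ``in particular'' clause follows identically.
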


\begin{proof}
It follows from \cite[Propositions 2.4(2) and 2.5]{DT} that Conjecture \ref{conjecture} holds for $\Lambda=R$, whence Question \ref{question} is affirmative for $\T=\sCM(R)$ by Corollary \ref{questionconjecture}.
\end{proof}

\section{Generation by orthogonal subcategories}

\begin{notation}
Throughout this section, let $R$ be a ring (not necessarily noetherian or commutative).
Let $\A$ be an abelian category and $\X$ a full subcategory.
We denote by ${}^\perp\X$ (resp. ${}^{\perp_n}\X$) the full subcategory of $\A$ consisting of objects $M$ such that $\Ext_\A^i(M,\X)=0$ for all $i\ge1$ (resp. $1\le i\le n$).
Dually, we denote by $\X^\perp$ (resp. $\X^{\perp_n}$) the full subcategory of $\A$ consisting of objects $M$ such that $\Ext_\A^i(\X,M)=0$ for all $i\ge1$ (resp. $1\le i\le n$).
\end{notation}

The following is the main result of this section.

\begin{theorem}\label{17121}
Let $\X$ be a full subcategory of $\A$, $M$ an object of $\A$ and $n\ge0$ an integer.
\begin{enumerate}[\rm(1)]
\item
Suppose that there exists an exact sequence in $\A$ of the form
$$
0 \to X_n \to \cdots \to X_0 \to N \to M \to 0
$$
with $X_0,\dots,X_n\in\X$ such that the corresponding element in $\Ext_\A^{n+1}(M,X_n)$ is nonzero.
Then $M\notin{\langle{}^\perp\X\rangle}_{n+1}$ in $\da$.
\item
Suppose that there exists an exact sequence in $\A$ of the form
$$
0 \to M \to N \to X^0 \to \cdots \to X^n \to 0
$$
with $X^0,\dots,X^n\in\X$ such that the corresponding element in $\Ext_\A^{n+1}(X_n,M)$ is nonzero.
Then $M\notin{\langle\X^\perp\rangle}_{n+1}$ in $\da$.
\end{enumerate}
\end{theorem}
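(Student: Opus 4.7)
The plan is to apply Rouquier's Ghost Lemma: in a triangulated category $\T$, if $M \in \langle \mathcal{S}\rangle_m$, then every composition of $m$ $\mathcal{S}$-ghost morphisms starting at $M$ vanishes, where a morphism $f$ is an $\mathcal{S}$-ghost if $\Hom_\T(S[j], f) = 0$ for every $S \in \mathcal{S}$ and $j \in \mathbb{Z}$. I will construct, from the given exact sequence, an explicit $(n+1)$-fold composition of ${}^\perp\X$-ghosts starting at $M$ which equals the nonzero class $\xi$, contradicting $M \in \langle{}^\perp\X\rangle_{n+1}$.

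For (1), first split the long exact sequence into short exact sequences in $\A$:
\[
0 \to L_0 \to N \to M \to 0, \qquad 0 \to L_{i+1} \to X_i \to L_i \to 0 \quad (0 \le i \le n-1),
\]
with $L_n = X_n$. Let $\delta_0 : M \to L_0[1]$ and $\delta_i : L_{i-1} \to L_i[1]$ (for $1 \le i \le n$) be the associated connecting morphisms in $\da$. By the standard Yoneda splicing, the given nonzero extension class is realized as the composition
\[
\xi \;=\; \delta_n[n]\circ\delta_{n-1}[n-1]\circ\cdots\circ\delta_1[1]\circ\delta_0 \;:\; M \longrightarrow X_n[n+1].
\]

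The central observation is that each $L_i$ lies in $({}^\perp\X)^\perp$, i.e.\ $\Ext_\A^{\ge 1}(P, L_i) = 0$ for every $P \in {}^\perp\X$. This is proved by downward induction on $i$: the base $L_n = X_n \in \X \subseteq ({}^\perp\X)^\perp$ is immediate, and the inductive step uses the long exact sequence of $\Ext_\A(P, -)$ applied to $0 \to L_{i+1} \to X_i \to L_i \to 0$, since both $L_{i+1}$ and $X_i$ have the required Ext-vanishing. Granting this, each $\delta_i[i] : L_{i-1}[i] \to L_i[i+1]$ (and similarly $\delta_0$) is checked to be a ${}^\perp\X$-ghost by a case analysis on $s \in \mathbb{Z}$: for every $s$, either the source $\Hom_\da(P[s], L_{i-1}[i]) = \Ext_\A^{i-s}(P, L_{i-1})$ or the target $\Hom_\da(P[s], L_i[i+1]) = \Ext_\A^{i+1-s}(P, L_i)$ vanishes, combining the orthogonality above with $\Ext_\A^{<0}(-,-) = 0$.

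Applying the Ghost Lemma to the nonzero $(n+1)$-fold composition $\xi$ yields the contradiction with $M \in \langle{}^\perp\X\rangle_{n+1}$, proving (1). Part (2) is dual: split $0 \to M \to N \to X^0 \to \cdots \to X^n \to 0$ into short exact sequences in $\A$, show by a symmetric induction that the resulting syzygies lie in ${}^\perp(\X^\perp)$, verify that the corresponding connecting morphisms assemble into an $(n+1)$-fold composition of $\X^\perp$-coghosts (morphisms $f$ with $\Hom_\da(f, Q[j]) = 0$ for every $Q \in \X^\perp$ and $j \in \mathbb{Z}$) realizing the extension class, and invoke the dual (coghost) form of the Ghost Lemma. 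The main obstacle in either part is establishing the orthogonality statement for the intermediate objects; once that is in hand, the ghost/coghost verifications and the final contradiction reduce to routine bookkeeping.
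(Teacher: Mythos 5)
Your proof is correct and follows essentially the same path as the paper's: split the long exact sequence into short exact sequences defining the syzygies $L_i$, note that each $L_i$ has an $\X$-resolution so $\Ext^{\ge1}_\A({}^\perp\X,L_i)=0$, verify by a degree case analysis that each connecting map $\delta_i[i]$ is a ${}^\perp\X$-ghost, and invoke the Ghost Lemma on the nonzero $(n+1)$-fold composite realizing the extension class. The only cosmetic differences are your phrasing of the orthogonality as $L_i\in({}^\perp\X)^\perp$ via downward induction (the paper reads it off directly from the resolution) and an index shift in the $\delta_i$'s.
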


\begin{proof}
We only prove the first assertion, because the second one can be shown by a dual argument.
Let $L=L_0$ be the image of the morphism $X_0\to N$, and for each $1\le i\le n$ let $L_i$ be the image of the morphism $X_i\to X_{i-1}$.
Then there is an exact sequence
\begin{equation}\label{17123}
0 \to X_n \to \cdots \to X_i \to L_i \to 0
\end{equation}
for $0\le i\le n$.
Set $L_{-1}=M$ and $X_{-1}=N$.
For each $-1\le i\le n-1$ the exact sequence $0\to L_{i+1}\to X_i\to L_i\to0$ induces a morphism $\delta_i:L_i\to L_{i+1}[1]$ in $\da$, and we obtain the composite morphism
$$
\eta:M[-1]=L_{-1}[-1]\xrightarrow{\delta_{-1}[-1]}L=L_0\xrightarrow{\delta_0}L_1[1]\xrightarrow{\delta_1[1]}\cdots\xrightarrow{\delta_{n-1}[n-1]}L_n[n]=X_n[n]
$$
in $\da$.
The morphism $\eta$ corresponds to the original exact sequence $0 \to X_n \to \cdots \to X_0 \to N \to M \to 0$ via the isomorphism $\Ext_\A^{n+1}(M,X_n)\cong\Hom_{\da}(M[-1],X_n[n])$.
By assumption $\eta$ is nonzero.

Fix any $C\in{}^\perp\X$ and $j\in\Z$.
Using \eqref{17123}, we easily see that $\Ext_\A^k(C,L_i)=0$ for all $0\ne k\in\Z$ and $0\le i\le n$, which implies $\Hom_{\da}(C[j],L_i[h])\cong\Ext_\A^{h-j}(C,L_i)=0$ for all $0\le i\le n$ and $j\ne h\in\Z$.
Note also that $\Hom_{\da}(C[j],L_{-1}[h])\cong\Ext_\A^{h-j}(C,M)=0$ for all $j>h\in\Z$.
Now it is easy to check that the induced map
$$
\Hom_{\da}(C[j],\delta_i[i]):\Hom_{\da}(C[j],L_i[i])\to\Hom_{\da}(C[j],L_{i+1}[i+1])
$$
is zero for $-1\le i\le n-1$ since either the domain or target of the map vanishes.
This shows $\Hom_{\da}({\langle{}^\perp\X\rangle}_1,\delta_i[i])=0$ for all $-1\le i\le n-1$.
Applying the ghost lemma, we deduce $\Hom_{\da}({\langle{}^\perp\X\rangle}_{n+1},\eta)=0$, which implies $M[-1]\notin{\langle{}^\perp\X\rangle}_{n+1}$ as $\eta\ne0$.
\end{proof}

To give our next result, we introduce some notation.

\begin{notation}
Let $\X$ be a full subcategory of $\A$.
We denote by $\X_n$ the full subcategory of $\A$ consisting of objects $M$ such that there exists an exact sequence $0 \to X_n \to \cdots \to X_0 \to M \to 0$ in $\A$ with $X_i\in\X$ for all $0\le i\le n$.
Dually, we denote by $\X^n$ the full subcategory of $\A$ consisting of objects $M$ such that there exists an exact sequence $0 \to M \to X^0 \to \cdots \to X^n \to 0$ in $\A$ with $X^i\in\X$ for all $0\le i\le n$.
\end{notation}

The first assertion of Theorem \ref{17121} immediately recovers \cite[Theorem 1.1]{Y2} as follows.
Here, ${}^\perp T\text{-}\mathrm{tri.dim}\,\Db(\mod R)$ is the (Rouquier) dimension of the triangulated category $\Db(\mod R)$ with respect to the full subcategory ${}^\perp T$ of $\mod R$; see \cite{ddcm}.

\begin{corollary}[Yoshiwaki]
Let $R$ be a noetherian ring and $T$ a cotilting $R$-module of injective dimension $d>0$.
Then ${}^\perp T\text{-}\mathrm{tri.dim}\,\Db(\mod R)=\mathrm{inj.dim}\,T$.
\end{corollary}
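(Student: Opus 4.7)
The plan is to establish the two inequalities ${}^\perp T\text{-}\mathrm{tri.dim}\,\Db(\mod R)\le d$ and $\ge d$ separately.

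For the upper bound, my plan is to show that each $M\in\mod R$ lies in ${\langle{}^\perp T\rangle}_{d+1}$, and then extend to all of $\Db(\mod R)$ by a standard brutal-truncation argument. The key input is the Auslander--Buchweitz style approximation available from the cotilting structure: combining the self-orthogonality $\Ext_R^{>0}(T,T)=0$ with the defining $\add T$-coresolution of an injective cogenerator yields, for every $M\in\mod R$, an exact sequence
\[
0\to Y_d\to Y_{d-1}\to\cdots\to Y_0\to M\to 0
\]
with each $Y_i\in{}^\perp T$. Chopping this into $d+1$ short exact sequences and reading them as triangles in $\Db(\mod R)$ exhibits $M$ as an iterated cone of objects of ${}^\perp T$, hence $M\in{\langle{}^\perp T\rangle}_{d+1}$.

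For the lower bound, Theorem~\ref{17121}(1) is the right tool. Because $\mathrm{inj.dim}\,T=d$ exactly, there exists $M\in\mod R$ with $\Ext_R^d(M,T)\ne 0$; fix a nonzero class $\xi$. I would take $\X=\add T\cup\inj R$, the union of $\add T$ with the class of injective $R$-modules. Adjoining injectives does not change the left orthogonal since $\Ext_R^{\ge 1}(-,I)=0$ for every injective $I$, and therefore ${}^\perp\X={}^\perp T$. Taking an injective coresolution $0\to T\to I^0\to\cdots\to I^d\to 0$ of $T$ and dimension-shifting through its cosyzygies $K^i=\ker(I^i\to I^{i+1})$, the class $\xi$ corresponds to a class in $\Ext_R^1(M,K^{d-1})$ realized by an extension $0\to K^{d-1}\to E\to M\to 0$. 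Splicing this with the partial coresolution of $T$ produces the Yoneda representative
\[
0\to T\to I^0\to I^1\to\cdots\to I^{d-2}\to E\to M\to 0,
\]
whose terms $T,I^0,\dots,I^{d-2}$ all lie in $\X$. Invoking Theorem~\ref{17121}(1) with $n=d-1$ then forces $M\notin{\langle{}^\perp T\rangle}_d$.

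The main technical obstacle lies in the upper bound, where the full cotilting structure must be unpacked to produce the uniform ${}^\perp T$-resolution of length $\le d$. The lower bound is by contrast almost automatic once one recognizes that $\X$ may be freely enlarged by any class killed by $\Ext_R^{\ge 1}(-,{-})$ (in particular the injectives) without altering ${}^\perp\X$, which leaves enough room to realize the nonzero Yoneda class from a single injective coresolution of $T$. A minor bookkeeping point is that when $R$ is noetherian but not artinian the injectives $I^j$ need not be finitely generated; this is handled by performing the Yoneda construction in $\Mod R$ and using that the fully faithful embedding $\Db(\mod R)\hookrightarrow\mathsf{D}(\Mod R)$ reflects membership in ${\langle{}^\perp T\rangle}_d$.
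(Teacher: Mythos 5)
Your lower bound argument is correct but takes a genuinely different route from the paper. The paper works entirely inside $\mod R$: it takes a Cohen--Macaulay approximation $0\to L\to N\to M\to0$ (with $N\in{}^\perp T$ and $L$ admitting a finite $\add T$-resolution, coming from Auslander--Buchweitz theory for the cotilting module) and splices that $\add T$-resolution onto the approximation, then applies Theorem~\ref{17121}(1) with $\X=\add T$. You instead enlarge $\X$ to $\add T\cup\inj R$, note that ${}^\perp\X={}^\perp T$ because injectives contribute nothing to a left orthogonal, and splice the short-exact realisation of the dimension-shifted class $\xi'\in\Ext^1_R(M,K^{d-1})$ onto the truncated injective coresolution $0\to T\to I^0\to\cdots\to I^{d-2}\to K^{d-1}\to0$. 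This is a clean alternative; it avoids invoking Cohen--Macaulay approximation, at the cost of leaving the finitely generated world (since the $I^j$ are in general only in $\Mod R$) and then descending back via the embedding $\Db(\mod R)\hookrightarrow\mathsf D(\Mod R)$, which preserves the balls $\langle-\rangle_d$ and hence \emph{reflects non-membership} (your phrase ``reflects membership'' is slightly off, but the logical direction you use is the right one). Both arguments place $X_{d-1}=T$, so both correctly produce a nonzero class in $\Ext^d_\A(M,X_{d-1})$, which is exactly what Theorem~\ref{17121}(1) needs with $n=d-1$.

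On the upper bound there is a small but real gap in what you wrote. The reduction ``each $M\in\mod R$ lies in $\langle{}^\perp T\rangle_{d+1}$, then extend by brutal truncation'' does not work if interpreted literally as breaking a complex into shifted cohomology modules: a complex with $b+1$ nonzero cohomologies would then only be placed in $\langle{}^\perp T\rangle_{(b+1)(d+1)}$, an unbounded estimate. The correct argument resolves the complex itself by a bounded proper ${}^\perp T$-resolution (using that ${}^\perp T$ is resolving and has finite resolution dimension $d$ on $\mod R$) and brutally truncates that resolution, as in the familiar proof that a ring of global dimension $n$ has $\Db(\mod R)=\langle\proj R\rangle_{n+1}$. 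The paper sidesteps all of this by simply citing Theorem~5.3 of \cite{ddcm} for the remaining half of the equality. Your sketch would be fine once ``brutal truncation'' is applied to the resolution of the complex rather than to its cohomology.
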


\begin{proof}
Choose an $R$-module $M$ with $\Ext_R^d(M,T)\ne0$, and take a Cohen--Macaulay approximation $0\to L\to N\to M\to0$, that is, an exact sequence of $R$-modules such that $N\in{}^\perp T$ and $L\in(\add T)_d$.
Applying Theorem \ref{17121}(1) to $\A=\mod R$, $\X=\add T$ and $n=d-1$, we have $M\notin{\langle{}^\perp T\rangle}_d$.
Combine this with \cite[Theorem 5.3]{ddcm}.
\end{proof}

Our Theorem \ref{17121} also gives rise to the following result.

\begin{corollary}\label{17126}
Let $\X$ be a full subcategory of $\A$, $M$ an object of $\A$ and $n>0$ an integer.
\begin{enumerate}[\rm(1)]
\item
If there is an exact sequence $0\to X_n\to\cdots\to X_0\to M\to0$ which is nonzero in $\Ext_\A^n(M,X_n)$, then $M\notin{\langle{}^\perp\X\rangle}_n$ in $\da$.
\item
If there is an exact sequence $0\to M\to X^0\to\cdots\to X^n\to0$ which is nonzero in $\Ext_\A^n(X^n,M)$, then $M\notin{\langle\X^\perp\rangle}_n$ in $\da$.
\end{enumerate}
\end{corollary}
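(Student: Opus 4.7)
The plan is to reduce Corollary \ref{17126} to Theorem \ref{17121} by a simple shift of indices. The sequence given in the corollary has one fewer term than the template required by Theorem \ref{17121}, but the theorem does not require its penultimate object $N$ to belong to $\X$. We can therefore absorb the discrepancy by promoting the last $\X$-term of the corollary's sequence to play the role of $N$.

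For part (1), I would apply Theorem \ref{17121}(1) with its parameter $n$ replaced by $n-1$ (permitted since $n>0$) to the given exact sequence
$$
0 \to X_n \to \cdots \to X_1 \to X_0 \to M \to 0,
$$
reading off its terms as follows: the objects $X_n, X_{n-1}, \dots, X_1$ play the role of the theorem's $X_{n-1}, X_{n-2}, \dots, X_0$, and $X_0$ plays the role of the theorem's $N$. Under this relabeling, the hypothesis that the sequence represents a nonzero class in $\Ext_\A^n(M, X_n)$ becomes exactly the hypothesis required by Theorem \ref{17121}(1), namely nonvanishing in $\Ext_\A^{(n-1)+1}$ of the new top $\X$-term. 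The theorem then yields $M \notin \langle {}^\perp\X \rangle_{(n-1)+1} = \langle {}^\perp\X \rangle_n$, as claimed.

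Part (2) follows by the symmetric argument: apply Theorem \ref{17121}(2) with parameter $n-1$ to the sequence $0 \to M \to X^0 \to X^1 \to \cdots \to X^n \to 0$, with $X^0$ taking the role of the theorem's $N$ and $X^1, \dots, X^n$ playing the roles of $X^0, \dots, X^{n-1}$. The nonvanishing class in $\Ext_\A^n(X^n, M)$ matches the theorem's $\Ext_\A^{(n-1)+1}$ hypothesis, and the conclusion $M \notin \langle \X^\perp \rangle_n$ drops out.

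There is no substantive obstacle beyond keeping the indexing straight. The only thing to double-check is that after the shift, the nonzero $\Ext$-class continues to live in the correct position attached to the new template, so that Theorem \ref{17121} applies directly without any further surgery on the sequence. Conceptually, the corollary is simply the special case of Theorem \ref{17121} in which the ``extra'' object $N$ happens to be another member of $\X$.
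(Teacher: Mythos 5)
Your proof is correct and is exactly the paper's own argument: the paper likewise invokes Theorem \ref{17121}(1) with the parameter shifted by one, treating $X_0$ as the object $N$ and noting that $n>0$ makes the shift legitimate, with part (2) handled dually. Nothing to add beyond what you wrote.
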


\begin{proof}
For (1), as $n\ge1$, we can apply Theorem \ref{17121}(1) to the exact sequence $0\to X_n\to\cdots\to X_1\to X_0\to M\to0$ to deduce that $M\notin{\langle{}^\perp\X\rangle}_n$, and (2) is shown dually.
\end{proof}

Let $M$ be an object $M$ of $\A$, and let $n$ be a nonnegative integer.
We say that \emph{$M$ has projective (resp. injective) dimension $n$} and denote it by $\pd M=n$ (resp. $\id M=n$) if there exists an exact sequence $0\to P_n\to P_{n-1}\to\cdots\to P_0\to M\to0$ (resp. $0\to M\to I^0\to\cdots\to I^n\to0$) with the $P_i$ projectives (resp. $I^i$ injectives), and no shorter such sequence exists.
The above result deduces the following corollary, whose partial assertion ``$M\notin{\langle\proj\A\rangle}_{\pd M}$ if $\pd M<\infty$'' is nothing but \cite[Lemma 2.4]{KK}.

\begin{corollary}\label{17124}
For an object $M\in\A$ one has:
$$
M\notin
\begin{cases}
{\langle\proj\A\rangle}_{\pd M} & \text{if $\pd M<\infty$},\\
{\langle\inj\A\rangle}_{\id M} & \text{if $\id M<\infty$},
\end{cases}
\qquad
M\notin
\begin{cases}
{\langle{}^\perp(\proj\A)\rangle}_{\pd M} & \text{if $\pd M<\infty$},\\
{\langle(\inj\A)^\perp\rangle}_{\id M} & \text{if $\id M<\infty$}.
\end{cases}
$$
\end{corollary}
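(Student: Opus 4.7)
The plan is to deduce Corollary~\ref{17124} from Corollary~\ref{17126}; I describe the projective-dimension cases in detail, since the injective-dimension ones are formally dual via part~(2) of Corollary~\ref{17126}.

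Let $n=\pd M<\infty$. The case $n=0$ is immediate, as a nonzero projective lies outside $\langle\proj\A\rangle_0=\{0\}$, so assume $n\geq 1$ and pick any projective resolution $0\to P_n\xrightarrow{d}P_{n-1}\to\cdots\to P_0\to M\to 0$. The crux is to show that the Yoneda class $\eta\in\Ext_\A^n(M,P_n)$ of this sequence is nonzero. Using the very same resolution to compute $\Ext_\A^n(M,P_n)$, the class $\eta$ is identified with that of $\id_{P_n}\in\Hom_\A(P_n,P_n)$ modulo the image of precomposition with $d$; were it zero, some $\sigma\colon P_{n-1}\to P_n$ would satisfy $\sigma d=\id_{P_n}$, forcing $d$ to be a split monomorphism. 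Writing $P_{n-1}=P_n\oplus Q$ and cancelling the $P_n$ summand then yields a projective resolution of $M$ of length $n-1$, contradicting $\pd M=n$.

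Once $\eta\neq 0$, Corollary~\ref{17126}(1) applied to $\X=\proj\A$ gives $M\notin\langle{}^\perp(\proj\A)\rangle_n$ in $\da$. Since every projective $P$ satisfies $\Ext_\A^i(P,-)=0$ for $i\geq 1$, we have $\proj\A\subseteq{}^\perp(\proj\A)$, hence $\langle\proj\A\rangle_n\subseteq\langle{}^\perp(\proj\A)\rangle_n$ and the stronger statement $M\notin\langle\proj\A\rangle_n$ follows. The ``$\id$'' cases are handled dually: take an injective coresolution $0\to M\to I^0\to\cdots\to I^n\to 0$ of length $n=\id M$, show via the Hom-dual splitting argument (with $d^{n-1}\colon I^{n-1}\to I^n$ now being a split epimorphism) that the Yoneda class in $\Ext_\A^n(I^n,M)$ is nonzero, apply Corollary~\ref{17126}(2) with $\X=\inj\A$ to obtain $M\notin\langle(\inj\A)^\perp\rangle_n$, and use the tautological inclusion $\inj\A\subseteq(\inj\A)^\perp$ to upgrade to $M\notin\langle\inj\A\rangle_n$.

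The only non-formal step in the entire argument is the nonvanishing of the Yoneda class of a resolution of minimal length, which is nothing but the classical characterization of $\pd$ and $\id$ as the shortest possible such length; once that is in place, the rest is essentially bookkeeping plus the monotonicity of the $\langle-\rangle_n$ construction.
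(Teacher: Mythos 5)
Your proof is correct and follows essentially the same route as the paper: both reduce the first pair of non-membership claims to the second via the inclusions $\proj\A\subseteq{}^\perp(\proj\A)$ and $\inj\A\subseteq(\inj\A)^\perp$, then establish nonvanishing of the Yoneda class of a minimal-length (co)resolution and invoke Corollary~\ref{17126}. The only cosmetic difference is that the paper detects nonvanishing by interpreting the class as a chain map $P\to P_n[n]$ in $\k(\proj\A)$ and ruling out a null-homotopy, while you compute $\Ext_\A^n(M,P_n)$ directly from the resolution and observe that the class is represented by $\id_{P_n}$; unwinding the null-homotopy in the paper's argument produces exactly the splitting $\sigma d=\id_{P_n}$ that you exhibit, so the two are the same computation. (You also explicitly treat the $n=0$ case, which the paper leaves implicit since Corollary~\ref{17126} assumes $n>0$; a small but welcome bit of extra care. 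One trivial wording slip: $M\notin\langle\proj\A\rangle_n$ is the \emph{weaker} statement that follows from $M\notin\langle{}^\perp(\proj\A)\rangle_n$, not the stronger one.)
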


\begin{proof}
The first assertion follows from the second one since $\proj\A$ and $\inj\A$ are contained in ${}^\perp(\proj\A)$ and $(\inj\A)^\perp$, respectively.
In what follows, we show the second assertion.
Setting $\pd M=n<\infty$, we have an exact sequence $\sigma:0\to P_n\xrightarrow{f}P_{n-1}\to\cdots\to P_0\to M\to0$ with $P_i\in\proj\A$ for $0\le i\le n$.
Then the morphism
$$
\begin{CD}
P\ @. =\ @. (0 @>>> P_n @>f>> P_{n-1} @>>> \cdots @>>> P_0 @>>> 0)\\
@V{\eta}VV @. @. @| @VVV @. @VVV\\
P_n[n]\ @. =\ @. (0 @>>> P_n @>>> 0 @>>> \cdots @>>> 0 @>>> 0)
\end{CD}
$$
in $\da$ corresponds to $\sigma$ via the isomorphism $\Hom_{\da}(P,P_n[n])\cong\Ext_\A^n(M,P_n)$.
All the objects of $\A$ appearing in the above diagram are in $\proj\A$, and we can regard $\eta$ as a morphism in $\k(\proj\A)$.
Suppose that $\eta$ is zero in $\da$.
Then $\eta$ is zero in $\k(\proj\A)$, which means that $\eta$ is null-homotopic.
It is easy to observe from this that $f$ is a split monomorphism in $\A$.
This implies that $M$ has a projective resolution of length $n-1$, which contradicts the fact that $\pd M=n$.
Therefore $\eta$ is nonzero in $\da$, and it follows from Corollary \ref{17126} that $M$ in not in ${\langle{}^\perp(\proj\A)\rangle}_n$.
The assertion for $\inj\A$ is shown dually.
\end{proof}

Let $R$ be a commutative ring. A finitely generated $R$-module $C$ is called \emph{semidualizing} if the homothety map $R\to\End_R(C)$ is an isomorphism and $\Ext_R^i(C,C)=0$ for all $i>0$. We give an application of Corollaries \ref{17126} and \ref{17124}, which involves semidualizing modules.

\begin{corollary}\label{17127}
Let $R$ be a commutative noetherian ring, and let $C$ be a semidualizing $R$-module.
The following hold for a (not necessarily finitely generated) $R$-module $M$.
\begin{enumerate}[\rm(1)]
\item
If $\pd_R(\Hom_R(C,M))=n<\infty$, then $M\notin{\langle{}^\perp C\rangle}_n$ in $\d(\Mod R)$.
\item
If $\id_R(C\otimes_RM)=n<\infty$, then $M\notin{\langle C^\perp\rangle}_n$ in $\d(\Mod R)$.
\end{enumerate}
\end{corollary}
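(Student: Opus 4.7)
The plan is to apply Corollary~\ref{17126} to each part after using the semidualizing structure of $C$ to convert the hypothesis into an exact sequence whose terms lie in the additive closure of $C$ (for (1)) or in the image of $\Hom_R(C,-)$ applied to injectives (for (2)). The key tool is the Foxby correspondence attached to $C$: writing $\mathcal{A}_C$ and $\mathcal{B}_C$ for the Auslander and Bass classes, modules of finite projective (resp.\ injective) dimension over $R$ lie in $\mathcal{A}_C$ (resp.\ $\mathcal{B}_C$), and the adjoint pair $(C\otimes_R-,\,\Hom_R(C,-))$ restricts to an equivalence $\mathcal{A}_C\simeq\mathcal{B}_C$.

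For (1), set $N:=\Hom_R(C,M)$. Since $\pd_R N=n<\infty$ we have $N\in\mathcal{A}_C$, so $\Tor_i^R(C,N)=0$ for $i\ge 1$ and the evaluation $C\otimes_R N\to M$ is an isomorphism. Tensoring a projective resolution $0\to P_n\to\cdots\to P_0\to N\to 0$ with $C$ then gives an exact sequence
\[
0\to C\otimes_R P_n\to\cdots\to C\otimes_R P_0\to M\to 0
\]
whose terms are direct sums of copies of $C$ and hence lie in ${}^\perp C$ (from $\Ext_R^i(\bigoplus C,C)=\prod\Ext_R^i(C,C)=0$ for $i\ge 1$, by the semidualizing property). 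The associated Yoneda element in $\Ext_R^n(M,C\otimes_R P_n)$ corresponds, via the Foxby isomorphism $\Ext_R^n(N,P_n)\cong\Ext_R^n(C\otimes_R N,C\otimes_R P_n)$, to the Yoneda element of the original projective resolution; the latter is nonzero because $\pd_R N=n$ (else $P_n\to P_{n-1}$ would split). Corollary~\ref{17126}(1), applied with $\X$ the full subcategory on the $C\otimes_R P_i$, then yields $M\notin\langle{}^\perp C\rangle_n$.

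Part (2) is dual. Take an injective resolution $0\to C\otimes_R M\to I^0\to\cdots\to I^n\to 0$; finite injective dimension places $C\otimes_R M$ in $\mathcal{B}_C$ and injectives already lie there, so $\Hom_R(C,-)$ is exact on this sequence. Foxby identifies $\Hom_R(C,C\otimes_R M)$ with $M$, producing
\[
0\to M\to \Hom_R(C,I^0)\to\cdots\to \Hom_R(C,I^n)\to 0
\]
with each $\Hom_R(C,I^j)\in C^\perp$; the Yoneda element in $\Ext_R^n(\Hom_R(C,I^n),M)$ is Foxby-conjugate to the nonzero generator of $\Ext_R^n(I^n,C\otimes_R M)$ coming from the injective resolution, so Corollary~\ref{17126}(2) yields $M\notin\langle C^\perp\rangle_n$. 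The main technical hurdle will be the Foxby bookkeeping: reliably transporting the Yoneda class across $C\otimes_R-$ (resp.\ $\Hom_R(C,-)$) without losing nonvanishing, and, under a strict reading of the hypothesis, certifying that $M$ itself lies in the relevant Foxby class so that the evaluation and coevaluation maps are genuinely isomorphisms.
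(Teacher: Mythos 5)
Your proof takes essentially the same route as the paper: tensor a projective resolution of $\Hom_R(C,M)$ with $C$, show that the corresponding Yoneda class remains nonzero, and invoke Corollary~\ref{17126} with $\X=\add C$. The only substantive difference is stylistic --- you transport nonvanishing explicitly via a Foxby isomorphism of $\Ext$-groups, whereas the paper argues by contradiction (apply $\Hom_R(C,-)$ to recover $\sigma$, then quote the argument from the proof of Corollary~\ref{17124}) --- and the Bass-class membership of $M$ that you flag as a worry is precisely what the paper's citation of (the proof of) \cite[Theorem~2.11(c)]{cdim} is meant to supply, so both proofs rest on the same supporting fact.
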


\begin{proof}
Let $\sigma:0\to P_n\to\cdots\to P_0\to\Hom_R(C,M)\to0$ be a projective resolution of $\Hom_R(C,M)$ in $\Mod R$.
Tensoring $C$ gives an exact sequence
$$
C\otimes_R\sigma:0\to C\otimes_RP_n\to\cdots\to C\otimes_RP_0\to M\to0
$$
by (the proof of) \cite[Theorem 2.11(c)]{cdim}.
Suppose that $C\otimes_R\sigma$ is zero in $\Ext_R^n(M,C\otimes_RP_n)$.
Then $\sigma=\Hom_R(C,C\otimes_R\sigma)$ is zero in $\Ext_R^n(\Hom_R(C,M),P_n)$, and as we have already seen in the proof of Corollary \ref{17124}, this yields a contradiction.
Hence $C\otimes_R\sigma$ is nonzero in $\Ext_R^n(M,C\otimes_RP_n)$, and we can apply Corollary \ref{17126} for $\X=\add C$ to deduce that $M$ is not in ${\langle{}^\perp C\rangle}_n$.
This shows (1), and a dual argument implies (2).
\end{proof}

Let $R$ be a commutative noetherian local ring.
Recall that a finitely generated $R$-module $M$ is called {\em (maximal) Cohen--Macaulay} if $\depth M\ge\dim R$.
If $R$ is furthermore (Iwanaga--)Gorenstein, then the two definitions of a Cohen--Macaulay module coincide.
We use the same notation $\CM(R)$ as in Section 2 to denote the full subcategory of $\mod R$ consisting of Cohen--Macaulay modules.
From Corollary \ref{17124} or \ref{17127}, we immediately obtain:

\begin{corollary}
Let $R$ be a noetherian ring, and let $M$ be a finitely generated $R$-module.
Let $n$ be a nonnegative integer.
Then the following statements hold in $\Db(\mod R)$.
\begin{enumerate}[\rm(1)]
\item
If $M$ has projective dimension $n$, then $M\notin{\langle{}^\perp R\rangle}_n$.
\item
If $R$ is Iwanaga--Gorenstein and $M$ has projective dimension $n$, then $M\notin{\langle\CM(R)\rangle}_n$.
\item
If $R$ is a commutative Cohen--Macaulay local ring with canonical module $\omega$ and $\Hom_R(\omega,M)$ has projective dimension $n$, then $M\notin{\langle\CM(R)\rangle}_n$.
\end{enumerate}
\end{corollary}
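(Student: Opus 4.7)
My plan is to deduce each of the three statements from the previously established Corollaries \ref{17124} and \ref{17127} by identifying the orthogonal subcategory appearing in the conclusion with one of $\proj R$, $\CM(R)$, or $\omega$. Once the appropriate comparisons between full subcategories are in place, the rest is formal.

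For (1), I would observe that in $\mod R$ every finitely generated projective is a direct summand of a finite free module, so $\Ext_R^i(-,R)=0$ is equivalent to $\Ext_R^i(-,P)=0$ for every $P\in\proj R$. Consequently ${}^\perp R = {}^\perp(\proj R)$ as full subcategories of $\mod R$, and the first line of Corollary \ref{17124}, applied to $\A=\mod R$ with $\pd M=n$, directly yields $M\notin\langle {}^\perp R\rangle_n$.

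For (2), when $R$ is Iwanaga--Gorenstein the definition of Cohen--Macaulay modules recalled in Section 2 reads $\Ext_R^i(X,R)=0$ for all $i>0$, so $\CM(R)$ coincides with ${}^\perp R$ inside $\mod R$. Hence $\langle\CM(R)\rangle_n=\langle{}^\perp R\rangle_n$ and (2) is (1) translated through this identification. For (3), I would apply Corollary \ref{17127}(1) with $C=\omega$, which is semidualizing because $\omega$ is the canonical module of the Cohen--Macaulay local ring $R$, to obtain $M\notin\langle{}^\perp\omega\rangle_n$ from the hypothesis $\pd_R(\Hom_R(\omega,M))=n$. To upgrade this to $M\notin\langle\CM(R)\rangle_n$ I invoke the standard inclusion $\CM(R)\subseteq{}^\perp\omega$: for $X\in\CM(R)$ one has $\Ext_R^i(X,\omega)=0$ for all $i>0$, by local duality or a direct argument using a system of parameters. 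This inclusion passes to thickenings, giving $\langle\CM(R)\rangle_n\subseteq\langle{}^\perp\omega\rangle_n$, and the conclusion follows.

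The only step requiring real care is verifying the correct direction of each subcategory comparison; since all three identifications (${}^\perp R={}^\perp(\proj R)$ in $\mod R$, $\CM(R)={}^\perp R$ in the Iwanaga--Gorenstein setting, and $\CM(R)\subseteq{}^\perp\omega$ in the canonical-module setting) are classical, I do not anticipate a genuine obstacle. The whole corollary is essentially a dictionary translating the outputs of Corollaries \ref{17124} and \ref{17127} into the language of Cohen--Macaulay modules.
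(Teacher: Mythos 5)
Your proof is correct and follows exactly the route the paper intends when it writes that the corollary is ``immediately'' obtained from Corollaries~\ref{17124} and~\ref{17127}: part (1) is the second displayed assertion of Corollary~\ref{17124} via the identification ${}^\perp R={}^\perp(\proj R)$ (projectives are summands of free modules), part (2) reduces to (1) by the very definition of $\CM$ over an Iwanaga--Gorenstein ring, and part (3) is Corollary~\ref{17127}(1) with $C=\omega$ combined with the inclusion $\CM(R)\subseteq{}^\perp\omega$ (the paper in fact uses the equality $\CM(R)={}^\perp\omega$ in the proof of Corollary~\ref{17125}, but only the inclusion you state is needed). All of the subcategory comparisons are in the correct direction, so the argument is complete.
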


To prove our next result, we establish a lemma which should be well-known to experts.

\begin{lemma}\label{17128}
Let $R$ be a commutative noetherian local ring.
Let $M,N$ be finitely generated $R$-modules with $\Ext_R^{>0}(M,N)=0$.
Then $\depth_R\Hom_R(M,N)=\depth_RN$.
\end{lemma}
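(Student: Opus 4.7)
The plan is to induct on $d:=\depth_R N$, assuming without loss of generality that both $M$ and $N$ are nonzero (otherwise $\Hom_R(M,N)=0$ and the statement holds by the usual convention on the depth of the zero module).

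For the base case $d=0$, the maximal ideal $\mathfrak{m}$ is an associated prime of $N$, so there is an embedding $k\hookrightarrow N$. Applying the left-exact functor $\Hom_R(M,-)$ produces an injection $\Hom_R(M,k)\hookrightarrow\Hom_R(M,N)$. Since $\Hom_R(M,k)\cong\Hom_k(M/\mathfrak{m}M,k)$ is a nonzero $k$-vector space by Nakayama's lemma, this exhibits a nonzero element of $\Hom_R(M,N)$ annihilated by $\mathfrak{m}$, so $\mathfrak{m}\in\mathrm{Ass}_R\Hom_R(M,N)$ and hence $\depth_R\Hom_R(M,N)=0$.

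For the inductive step $d\ge 1$, choose an $N$-regular element $x\in\mathfrak{m}$. The short exact sequence $0\to N\xrightarrow{x}N\to N/xN\to 0$, combined with the vanishing $\Ext_R^1(M,N)=0$, yields the exact sequence
$$0\to\Hom_R(M,N)\xrightarrow{x}\Hom_R(M,N)\to\Hom_R(M,N/xN)\to 0,$$
while the long exact $\Ext$-sequence and $\Ext_R^{>0}(M,N)=0$ together force $\Ext_R^{>0}(M,N/xN)=0$ as well. Hence $x$ is $\Hom_R(M,N)$-regular with quotient isomorphic to $\Hom_R(M,N/xN)$, and the induction hypothesis applies to $N/xN$ (whose depth equals $d-1$) to give $\depth_R\Hom_R(M,N/xN)=d-1$. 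Passing back across the regular element $x$ yields $\depth_R\Hom_R(M,N)=d$, as required.

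The argument is essentially a textbook depth induction using a $\Hom$-regular sequence, and I do not expect any genuine obstacle. The only point one must be careful with is that the hypothesis $\Ext_R^{>0}(M,N)=0$ plays a double role: it allows $x$ to pass cleanly across $\Hom_R(M,-)$ in the inductive step, and it propagates to $N/xN$ so that the induction hypothesis remains available. The base case then reduces to producing an $\mathfrak{m}$-torsion element of $\Hom_R(M,N)$, which is immediate from the injection $\Hom_R(M,k)\hookrightarrow\Hom_R(M,N)$.
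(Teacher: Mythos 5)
Your proof is correct, but it takes a genuinely different route from the paper. You run a classical depth induction: in the base case $\depth N = 0$ you exhibit $\mathfrak m$ as an associated prime of $\Hom_R(M,N)$ via the embedding $\Hom_R(M,k)\hookrightarrow\Hom_R(M,N)$, and in the inductive step you pick an $N$-regular $x$, use $\Ext^1_R(M,N)=0$ to get the exact sequence $0\to\Hom_R(M,N)\xrightarrow{x}\Hom_R(M,N)\to\Hom_R(M,N/xN)\to0$, check that the vanishing hypothesis passes to $N/xN$, and invoke the inductive hypothesis. The paper instead proceeds entirely in one step via the derived adjunction $\rhom_R(k,\rhom_R(M,N))\cong\rhom_R(k\otimes_R^{\mathbf L}M,N)$ together with $\rhom_R(M,N)\simeq\Hom_R(M,N)$, extracting the spectral sequence $E_2^{pq}=\Ext^p_R(\Tor^R_q(k,M),N)\Rightarrow\Ext^{p+q}_R(k,\Hom_R(M,N))$ and reading off the vanishing range and the nonvanishing of $H^t$ from the $E_2$-page. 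Your argument is more elementary and self-contained; the paper's is shorter once the derived-category formalism is in place (which it already is elsewhere in that section) and avoids the case distinction and the bookkeeping that $\Ext^{>0}_R(M,N/xN)=0$ must be verified at each inductive stage. Both are complete; the one delicate point in yours, that the $\Ext$-vanishing hypothesis descends to $N/xN$, is correctly handled.
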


\begin{proof}
Let $k$ be the residue field of $R$, and put $t=\depth_RN$.
There are isomorphisms
$$
\rhom_R(k,\Hom_R(M,N))\cong\rhom_R(k,\rhom_R(M,N))\cong\rhom_R(k\otimes_R^\mathbf{L}M,N)
$$
which yield a spectral sequence
$$
E_2^{pq}=\Ext_R^p(\Tor^R_q(k,M),N)\Rightarrow H^{p+q}=\Ext_R^{p+q}(k,\Hom_R(M,N)).
$$
Since $E_2^{pq}=0$ for all $(p,q)$ with $p<t$, we see that $H^t=E_2^{t0}$ and $H^i=0$ for all $i<t$.
Hence $\Ext_R^i(k,\Hom_R(M,N))=0$ for all $i<t$, and $\Ext_R^t(k,\Hom_R(M,N))$ is isomorphic to $\Ext_R^t(k\otimes_RM,N)$.
As $k\otimes_RM$ is a nonzero $k$-vector space and $\Ext_R^t(k,N)$ is nonzero, so is $\Ext_R^t(k\otimes_RM,N)$, and so is $\Ext_R^t(k,\Hom_R(M,N))$.
Thus $\depth_R\Hom_R(M,N)=t$.
\end{proof}

In the case of local rings, the number $n$ in Corollary \ref{17127}(1) can be computed by using the well-established invariant of depth.

\begin{corollary}
Let $R$ be a commutative noetherian local ring, and let $C$ be a semidualizing $R$-module.
Let $M$ be a finitely generated $R$-module such that $\Hom_R(C,M)$ has finite projective dimension.
Then $M\notin{\langle{}^\perp C\rangle}_n$ in $\Db(\mod R)$, where $n=\depth R-\depth M$.
\end{corollary}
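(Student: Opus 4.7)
The plan is to deduce the statement directly from Corollary \ref{17127}(1), which already gives $M\notin\langle{}^\perp C\rangle_{\pd_R\Hom_R(C,M)}$ whenever $\pd_R\Hom_R(C,M)<\infty$. It therefore suffices to prove the numerical identity
$$
\pd_R\Hom_R(C,M)=\depth R-\depth M.
$$

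Since $\Hom_R(C,M)$ is a finitely generated $R$-module of finite projective dimension over the commutative noetherian local ring $R$, the Auslander--Buchsbaum formula supplies the first half of this computation: $\pd_R\Hom_R(C,M)=\depth R-\depth_R\Hom_R(C,M)$. The remaining identity $\depth_R\Hom_R(C,M)=\depth M$ is exactly what Lemma \ref{17128} produces when applied to the pair $(C,M)$, provided one has verified the hypothesis $\Ext^i_R(C,M)=0$ for every $i>0$.

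Checking this Ext-vanishing is the main obstacle, and it is where the semidualizing hypothesis on $C$ enters essentially. The key input is a Foxby-equivalence type statement, of the same flavor as the tensor-product exactness result already invoked inside the proof of Corollary \ref{17127}: when $C$ is semidualizing, finiteness of $\pd_R\Hom_R(C,M)$ forces $M$ to belong to the Bass class of $C$, and membership in the Bass class immediately delivers both $\Ext^{>0}_R(C,M)=0$ and the natural isomorphism $M\cong C\otimes_R\Hom_R(C,M)$. Granted this, chaining Auslander--Buchsbaum with Lemma \ref{17128} yields $\pd_R\Hom_R(C,M)=\depth R-\depth M=n$, and Corollary \ref{17127}(1) then gives $M\notin\langle{}^\perp C\rangle_n$ in $\Db(\mod R)$, as required.
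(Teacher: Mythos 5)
Your proposal is correct and matches the paper's argument essentially step for step: invoke Corollary \ref{17127}(1), reduce to the numerical identity $\pd_R\Hom_R(C,M)=\depth R-\depth M$, obtain it from Auslander--Buchsbaum plus Lemma \ref{17128}, and verify the needed $\Ext_R^{>0}(C,M)=0$ through the Bass class / Foxby equivalence theory for semidualizing modules (the paper cites \cite[Theorem 2.11(c) and Corollary 2.9(a)]{cdim} for exactly this point). No meaningful divergence from the paper's route.
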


\begin{proof}
It follows from \cite[Theorem 2.11(c) and Corollary 2.9(a)]{cdim} that $\Ext_R^{>0}(C,M)=0$, and by Lemma \ref{17128} we have $\depth\Hom_R(C,M)=\depth M$.
Using the Auslander--Buchsbaum formula, we get $\pd\Hom_R(C,M)=\depth R-\depth\Hom_R(C,M)=\depth R-\depth M$.
The assertion now follows from Corollary \ref{17127}(1).
\end{proof}

We also detect relationships among orthogonal subcategories.

\begin{corollary}\label{17122}
Let $\X$ be a full subcategory of $\A$.
One then has:
$$
{}^{\perp_n}\X\cap{\langle{}^\perp\X\rangle}_{n+1}\subseteq{}^{\perp_1}(\X_n),\qquad
\X^{\perp_n}\cap{\langle\X^\perp\rangle}_{n+1}\subseteq{(\X^n)}^{\perp_1}
$$
\end{corollary}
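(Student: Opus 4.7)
The plan is to prove the first containment; the second will follow by the obvious dual argument. Fix $M \in {}^{\perp_n}\X \cap \langle {}^\perp\X \rangle_{n+1}$ and an arbitrary $Y \in \X_n$, and aim to show $\Ext^1_\A(M, Y) = 0$, arguing by contradiction.

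Assuming some nonzero $\xi \in \Ext^1_\A(M, Y)$ realized by an extension $0 \to Y \to N \to M \to 0$, the plan is to splice it with the chosen $\X$-resolution $0 \to X_n \to \cdots \to X_0 \to Y \to 0$ witnessing $Y \in \X_n$, producing
\[
0 \to X_n \to \cdots \to X_0 \to N \to M \to 0,
\]
whose Yoneda class $\eta \in \Ext^{n+1}_\A(M, X_n)$ is the target of the argument. Once $\eta$ is shown to be nonzero, Theorem \ref{17121}(1) will directly yield $M \notin \langle {}^\perp\X \rangle_{n+1}$, contradicting the hypothesis and forcing $\Ext^1_\A(M, Y) = 0$.

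The heart of the proof will be verifying $\eta \ne 0$. I will break the resolution of $Y$ into short exact sequences $0 \to K_{i+1} \to X_i \to K_i \to 0$ for $0 \le i \le n-1$, where $K_0 = Y$, $K_n = X_n$, and $K_i = \mathrm{im}(X_i \to X_{i-1})$ for $1 \le i \le n-1$. Applying $\Hom_\A(M, -)$ to each and using that $M \in {}^{\perp_n}\X$ kills $\Ext^{i+1}_\A(M, X_i)$ in the long exact sequence for every $0 \le i \le n-1$, each connecting map $\Ext^{i+1}_\A(M, K_i) \to \Ext^{i+2}_\A(M, K_{i+1})$ is forced to be injective. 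By the standard identification of Yoneda splicing with iterated connecting homomorphisms, $\eta$ is precisely the image of $\xi$ under the composite of these $n$ injections, hence nonzero.

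The main obstacle will be the small bookkeeping step identifying $\eta$ with the composite of connecting maps; once that standard computation is in place, the argument collapses to a single invocation of Theorem \ref{17121}(1). The dual inclusion is then obtained by running the mirror-image argument, splicing a resolution $0 \to Y \to X^0 \to \cdots \to X^n \to 0$ with an extension $0 \to M \to N \to Y \to 0$ representing a hypothetical nonzero class in $\Ext^1_\A(Y, M)$, shifting dimensions via $\Hom_\A(-, M)$ in place of $\Hom_\A(M, -)$, and applying Theorem \ref{17121}(2).
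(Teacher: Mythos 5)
Your proposal is correct and follows essentially the same route as the paper: argue by contradiction, splice the hypothetical nonsplit extension with the $\X$-resolution, show the resulting Yoneda class in $\Ext_\A^{n+1}(M,X_n)$ is nonzero using $M\in{}^{\perp_n}\X$, and invoke Theorem~\ref{17121}(1). The only difference is cosmetic: where the paper asserts that the map $g:\Ext_\A^1(M,N)\to\Ext_\A^{n+1}(M,X_n)$ is ``easy to see'' to be injective, you spell out the decomposition into short exact sequences and the injectivity of each connecting map, which is exactly the intended justification.
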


\begin{proof}
We only prove the first equality; the second one is shown dually.
Pick any object $M$ in ${}^{\perp_n}\X\cap{\langle{}^\perp\X\rangle}_{n+1}$.
Assume that $M$ is not in ${}^{\perp_1}(\X_n)$.
Then there exists $N\in\X_n$ such that $\Ext_\A^1(M,N)\ne0$.
This implies that there are exact sequences
$$
\sigma:\ 0 \to N \to E \to M \to 0,\qquad
\tau:\ 0 \to X_n \to \cdots \to X_0 \to N \to 0
$$
in $\A$ such that $\sigma$ is nonsplit and $X_0,\dots,X_n$ belong to $\X$.
We have morphisms
$$
\Hom_\A(M,M)\xrightarrow{f}\Ext_\A^1(M,N)\xrightarrow{g}\Ext_\A^{n+1}(M,X_n),
$$
where $f,g$ are induced from $\sigma,\tau$ respectively.
The map $f$ sends the identity map $\id_M$ of $M$ to the element $[\sigma]\in\Ext_\A^1(M,N)$ corresponding to the short exact sequence $\sigma$.
As $\sigma$ is nonsplit, $[\sigma]$ is nonzero.
Splicing $\sigma$ and $\tau$, we get an exact sequence
$$
\upsilon:\ 0 \to X_n \to \cdots \to X_0 \to E \to M \to 0.
$$
The map $g$ sends $[\sigma]$ to $[\upsilon]\in\Ext_\A^{n+1}(M,X_n)$.
Since $M$ is in ${}^{\perp_n}\X$, it is easy to see that $g$ is injective.
Hence $[\upsilon]$ is also nonzero.
Thus we can apply Theorem \ref{17121}(1) to see that $M$ is not in ${\langle{}^\perp\X\rangle}_{n+1}$, contrary to the choice of $M$.
This contradiction shows the assertion.
\end{proof}

We denote by $\fl R$ the full subcategory of $\mod R$ consisting of modules of finite length.
Here is an application of the above result.

\begin{corollary}\label{17125}
Let $R$ be a commutative Cohen--Macaulay local ring of Krull dimension $d>0$ with canonical module $\omega$.
Then $\fl R\cap{\langle\CM(R)\rangle}_d=0$.
\end{corollary}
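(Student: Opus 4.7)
The plan is to apply Corollary \ref{17122} with $\X = \add\omega$ and $n = d-1$. Since $R$ is Cohen--Macaulay local with canonical module, one has $\CM(R) = {}^\perp\omega = {}^\perp(\add\omega)$, so that corollary gives
$$
{}^{\perp_{d-1}}(\add\omega) \cap \langle \CM(R)\rangle_d \subseteq {}^{\perp_1}\bigl((\add\omega)_{d-1}\bigr).
$$
The strategy is therefore twofold: first, check that $\fl R \subseteq {}^{\perp_{d-1}}(\add\omega)$, so that any $M \in \fl R \cap \langle\CM(R)\rangle_d$ automatically satisfies $\Ext^1_R(M,N) = 0$ for every $N \in (\add\omega)_{d-1}$; second, exhibit a single such $N$ for which $\Ext^1_R(M,N) \ne 0$ whenever $M \in \fl R$ is nonzero, thereby forcing $M = 0$.

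For the first step, local duality does the job: for $M$ of finite length, $\Ext^i_R(M,\omega)$ is Matlis dual to $H^{d-i}_\mathfrak{m}(M)$, and the only nonzero local cohomology of such $M$ is $H^0_\mathfrak{m}(M) = M$ itself, so $\Ext^i_R(M,\omega) = 0$ for all $1 \le i \le d-1$. For the second step, pick an $\omega$-regular sequence $x_1,\dots,x_{d-1}$ in $\mathfrak{m}$ (which exists because $\depth\omega = d$) and set $N := \omega/(x_1,\dots,x_{d-1})\omega$. Tensoring the Koszul complex on $x_1,\dots,x_{d-1}$ with $\omega$ yields an exact sequence $0 \to \omega \to \omega^{\binom{d-1}{d-2}} \to \cdots \to \omega \to N \to 0$ with every term in $\add\omega$, exhibiting $N \in (\add\omega)_{d-1}$, and by regularity $\depth N = \depth\omega - (d-1) = 1$.

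The conclusion then comes from Ischebeck's theorem, which gives $\Ext^1_R(M,N) \ne 0$ for any nonzero $M \in \fl R$ and any finitely generated $N$ with $\depth N = 1$. The main structural step is the initial translation via Corollary \ref{17122}: once one sees how to rewrite membership in $\langle\CM(R)\rangle_d$ as an orthogonality against an $\omega$-resolved module, all the remaining inputs are standard. The only routine point needing attention is the non-vanishing half of Ischebeck, which follows by induction on the length of $M$ from the classical fact $\Ext^{\depth N}(k,N) \ne 0$ using the long exact sequence of $\Ext$.
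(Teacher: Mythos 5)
Your proof is correct, and it shares the key structural step with the paper: both begin by applying Corollary~\ref{17122} with $\X=\add\omega$ and $n=d-1$, and both observe that a finite-length module lies in ${}^{\perp_{d-1}}\omega$ because $\omega$ has depth $d$ (your local-duality phrasing is equivalent to the paper's depth count). Where the two proofs diverge is in how they derive the contradiction from membership in ${}^{\perp_1}\bigl((\add\omega)_{d-1}\bigr)$. The paper takes a nonzero finite-length $L$, forms a \emph{minimal} Cohen--Macaulay approximation $0\to B\to A\to L\to 0$, shows via the depth lemma that $B$ has depth $1$ and hence lands in $(\add\omega)_{d-1}$, and concludes the sequence must split, contradicting minimality. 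You instead construct a single explicit test module $N=\omega/(x_1,\dots,x_{d-1})\omega$ in $(\add\omega)_{d-1}$ of depth $1$ via the Koszul resolution, and appeal to the non-vanishing half of Ischebeck (grade equals depth for finite-length test modules) to get $\Ext^1_R(M,N)\neq 0$. Your route avoids Auslander--Buchweitz approximation theory at the cost of invoking (and correctly sketching an induction for) the Ischebeck-type non-vanishing; both are standard, and which one feels more natural is a matter of taste. Each argument is independently valid and essentially self-contained once Corollary~\ref{17122} is in hand.
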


\begin{proof}
Note that $\CM(R)={}^\perp\omega$ in the abelian category $\mod R$.
Applying Corollary \ref{17122} to $\X=\add\omega$ and $n=d-1\ge0$, we see that ${}^{\perp_{d-1}}\omega\cap{\langle\CM(R)\rangle}_d$ is contained in ${}^{\perp_1}(({\add\omega})_{d-1})$.

Let $L$ be an $R$-module of finite length that belongs to ${\langle\CM(R)\rangle}_d$.
Then $L$ is in ${}^{\perp_{d-1}}\omega$ since $\omega$ has depth $d$.
Hence $L$ is in ${}^{\perp_1}(({\add\omega})_{d-1})$, that is, $\Ext_R^1(L,({\add\omega})_{d-1})=0$.
Take a minimal Cohen--Macaulay approximation $0 \to B \to A \to L \to 0$.
Then $B$ belongs to $(\add\omega)_d$.
If $L$ is nonzero, then the depth lemma shows that $B$ has depth $1$, and again the depth lemma shows that $B$ is in $(\add\omega)_{d-1}$.
Thus the above exact sequence must split, which is a contradiction.
It follows that $L=0$, and the proof is completed.
\end{proof}

Corollary \ref{17125} immediately recovers \cite[Proposition 7.14]{R}:

\begin{corollary}[Rouquier]
Let $R$ be a commutative noetherian local ring with residue field $k$.
Then $k\notin{\langle R\rangle}_{\dim R}$.
\end{corollary}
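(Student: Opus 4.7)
The plan is to deduce the statement from Corollary~\ref{17125} by reducing to the case of a complete Cohen--Macaulay local ring with canonical module, where that corollary applies directly.

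If $d := \dim R = 0$, then $\langle R\rangle_0 \subseteq \Db(\mod R)$ is the zero subcategory, so trivially $k \notin \langle R\rangle_0$. Assume henceforth that $d > 0$. I would first reduce to the complete case: since $R \to \widehat R$ is faithfully flat with $\dim \widehat R = d$ and the same residue field $k$, the exact base change $- \otimes_R \widehat R \colon \Db(\mod R) \to \Db(\mod \widehat R)$ carries $\langle R\rangle_d$ into $\langle \widehat R\rangle_d$ and fixes $k$; so I may assume $R = \widehat R$.

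Next, by Cohen's structure theorem (Noether normalization for complete local rings), $R$ is module-finite over a complete regular local subring $T \subseteq R$ with $\dim T = d$ and residue field $k$. Since $T$ is regular, it is Cohen--Macaulay with canonical module $T$ itself, and $\CM(T) = \add T$. Corollary~\ref{17125} applied to $T$ therefore yields $\fl T \cap \langle T\rangle_d = 0$; in particular $k \notin \langle T\rangle_d$ in $\Db(\mod T)$.

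The remaining step is to transfer this nonmembership back to $\Db(\mod R)$. Restriction of scalars $\Db(\mod R) \to \Db(\mod T)$ is exact and sends $\langle R\rangle_d$ into $\langle R\rangle_d$ (with $R$ viewed as a $T$-module), so a putative $k \in \langle R\rangle_d$ in $\Db(\mod R)$ would propagate to $k \in \langle R\rangle_d$ in $\Db(\mod T)$. When $R$ is itself Cohen--Macaulay, it is maximal Cohen--Macaulay, hence free, over the regular ring $T$, so $R \in \langle T\rangle_1$ and therefore $\langle R\rangle_d \subseteq \langle T\rangle_d$, immediately contradicting the previous paragraph. The main obstacle I anticipate is the non--Cohen--Macaulay case, where $\pd_T R = d - \depth R > 0$ gives only the weaker automatic inclusion $\langle R\rangle_d \subseteq \langle T\rangle_{d(d - \depth R + 1)}$; closing this gap requires a finer argument, for instance a direct application of Corollary~\ref{17126}(1) to $k$ in $\mod R$ with a carefully chosen full subcategory $\X$, or a more delicate change-of-rings that takes into account the extra depth slack.
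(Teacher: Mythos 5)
Your reduction to a complete regular Cohen subring $T\subseteq\widehat R$ with $\dim T=\dim R$ and $k\notin\langle T\rangle_d$ (via Corollary \ref{17125}) is exactly the paper's intended route — the paper's one-line remark ``reduce to the case where $R$ is regular'' is accurately unpacked by your steps (1)–(4), and the transfer through restriction of scalars works cleanly when $R$ is Cohen--Macaulay, since then $\widehat R$ is a free $T$-module, $\langle \widehat R\rangle_d$ lands inside $\langle T\rangle_d$ in $\Db(\mod T)$, and Corollary \ref{17125} (or equivalently Corollary \ref{17124} with $\pd_T k=d$) finishes.

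The gap you flag in the last paragraph is real, and your diagnosis of it is correct. When $\depth R<\dim R$, the Auslander--Buchsbaum formula gives $\pd_T\widehat R=d-\depth R>0$, so $\widehat R$ is no longer in $\add T=\langle T\rangle_1$; the best automatic inclusion is $\langle\widehat R\rangle_d\subseteq\langle T\rangle_{d(\pd_T\widehat R+1)}$, which does not contradict $k\notin\langle T\rangle_d$. Restriction of scalars along $T\hookrightarrow\widehat R$ genuinely cannot close this, and no choice of Cohen subring can, since freeness of $\widehat R$ over such a $T$ is equivalent to $R$ being Cohen--Macaulay. It is worth noting that the paper's own proof is just as terse here — it asserts the reduction to the regular case without addressing the obstruction you identify — so this gap is inherited from the source rather than introduced by you. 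Closing it requires a genuinely different input beyond Corollaries \ref{17125}/\ref{17126}: Rouquier's original Proposition 7.14, and the sharper level bounds of Avramov--Buchweitz--Iyengar--Miller, rely on Koszul complexes together with New Intersection Theorem--type arguments (or the existence of big Cohen--Macaulay modules), not on a change of rings to a regular subring. So: your proof is correct and complete in the Cohen--Macaulay (in particular regular) case, identical in spirit to the paper's, and incomplete — with the incompleteness correctly and honestly located — in the general case.
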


\noindent
Indeed, we can reduce to the case where $R$ is regular with $\dim R>0$, and the corollary applies.
One can also recover it from the first assertion of Corollary \ref{17124}.


\end{document}